\newtheorem{theorem}[equation]{Theorem}
\newtheorem{lemma}[equation]{Lemma}
\newtheorem{proposition}[equation]{Proposition}
\newtheorem{cor}[equation]{Corollary}
\newtheorem{maintheorem}{Theorem}
\theoremstyle{definition}
\newtheorem{definition}[equation]{Definition}
\newtheorem{remark}[equation]{Remark}
\newtheorem{warning}[equation]{Warning}
\newtheorem{example}[equation]{Example}
\numberwithin{equation}{section}
\DeclareMathOperator\Der{Der}
\DeclareMathOperator{\Hom}{Hom} 
\DeclareMathOperator{\Spec}{Spec} 
\DeclareMathOperator{\Hilb}{Hilb}
\newcommand{\mc}{s}
\newcommand{\al}{t}
\newcommand{\new}{\mathrm{new}}
\newcommand{\old}{\mathrm{old}}
\newcommand{\mfm}{\mathfrak{m}}
\newcommand{\mfJ}{\mathfrak{J}}
\def\pow#1{ \llbracket  #1 \rrbracket }
\newcommand\cH{\mathcal{H}}
\renewcommand\AA{\mathbb{A}}
\newcommand{\KK}{\Bbbk}
\newcommand\ZZ{\mathbb{Z}}
\newcommand{\mfB}{\mathfrak{B}}
\title{Local Equations for Hilbert Schemes of Points}
\author{Nathan Ilten}
\address{Department of Mathematics,
Simon Fraser University,
8888 University Drive,
Burnaby BC V5A 1S6,
Canada}
\email{nilten@sfu.ca}
\author{Francesco Meazzini}
\address{Dipartimento di Matematica Guido Castelnuovo, Sapienza Universit\`a di Roma, Piazzale Aldo Moro 5, 00185 Roma, Italy}
\email{francesco.meazzini@uniroma1.it}
\author{Andrea Petracci}
\address{Dipartimento di Matematica, Universit\`a di Bologna, Piazza di Porta San Donato 5, 40126 Bologna, Italy}
\email{a.petracci@unibo.it}
\begin{document}

\begin{abstract}
We compute the completion of the local ring of the Hilbert scheme of degree $n+1$ subschemes of $\AA^n$ at the point corresponding to the ideal $\langle x_1,\ldots,x_n\rangle^2$, and describe the completion of the universal family. For the purposes of comparison, we do this computation with both classical and DGLA methods. We use our explicit equations to produce high dimensional linear subspaces of the Hilbert scheme, and compare our equations with those coming from deformations of based algebras.
\end{abstract}

\maketitle

\section{Introduction}\label{sec:intro}
We always work over a field of characteristic zero denoted by $\KK$.
For integers $d,n\geq 1$, let $\Hilb_{d}^{n}$ denote the Hilbert scheme of $0$-dimensional degree $d$ closed subschemes of the affine space $\AA^n$.
In the polynomial ring $S = \KK[x_1,\dots,x_n]$, consider the ideal $I = \langle x_1,\dots,x_n\rangle^2 \subset S$. This cuts out a degree $n+1$ zero-dimensional closed subscheme of $\AA^n$, and hence gives a $\KK$-rational point $[I]\in \Hilb_{n+1}^{n}$.

The purpose of this note is twofold. Firstly, we explicitly compute the local structure of $\Hilb_{n+1}^{n}$ at the point $[I]$. Secondly, we do this in two ways: using the classical method of lifting syzygies, but also using the more modern approach via differential graded lie algebras (DGLAs). We hope that this side-by-side comparison of methods helps illuminate the more abstract DGLA method.

In order to state our main result, we introduce a little more notation. We assume throughout the paper that $n\geq 3$ is fixed.
Consider the power series ring
\[
	\KK\pow{\al_{ij}^k}:=\KK \pow{\al_{ij}^k \mid i,j,k \in \{ 1,\dots,n\}, i \leq j  }
\]
in $\frac{n^2(n+1)}{2}$ variables. We set $\al_{ij}^k=\al_{ji}^k$ whenever $i>j$.\footnote{Note that the $k$ in $\al_{ij}^k$ is an index, not an exponent. To avoid any confusion, these variables will \emph{never} appear with exponents. The same convention applies to the $\gamma_{ijk}^\ell$ defined in \eqref{eqn:gamma}.}
For indices $1\leq i,j,k,\ell \leq n$ we define 
\begin{equation}\label{eqn:gamma}
	\gamma_{ijk}^\ell=\sum_{\lambda=1}^n \al_{ij}^{\lambda}\al_{k\lambda}^{\ell}-\al_{ik}^{\lambda}\al_{j\lambda}^{\ell}\in \KK\pow{\al_{ij}^{k}}.
\end{equation}
	 We then let $\mfJ$ be the ideal of $\KK\pow{\al_{ij}^{k}}$
generated by 
\begin{align*}
	&\gamma_{ijk}^{\ell}\qquad\qquad &1\leq i,j,k,\ell\leq n,\ \textrm{with}\ j, k,\ell\ \textrm{distinct};\\
	&\gamma_{ijk}^{k}-\gamma_{ij\ell}^{\ell} & 1\leq i,j,k,\ell\leq n,\ j\neq k,\ j\neq \ell.
\end{align*}

\begin{maintheorem}\label{thm:main}
Let $n\geq 3$.
	The completion of the local ring of $\Hilb_{n+1}^{n}$ at the point $[I]$ is isomorphic to the quotient of the power series ring $\KK\pow{\al_{ij}^{k}}$
	by $\mfJ$. 
The universal family in a formal neighborhood of $[I]$ is given by the ideal of $S\pow{\al_{ij}^k}$ generated by
\begin{align*}
	x_ix_j+\sum_{k=1}^n\left( \al_{ij}^{k}x_k+\frac{\gamma_{ijk}^{k}}{n-1}\right)\qquad 1\leq i \leq j \leq n.
\end{align*}
Furthermore, one obtains the miniversal base space and family of $\Spec S/I$ from the above by setting $\al_{ij}^{k}=0$ for $i=j=k$.
\end{maintheorem}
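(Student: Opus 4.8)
The plan is to derive the last sentence of the theorem from its first two parts, together with the smoothness of the forgetful morphism from the local Hilbert functor to the deformation functor of the \emph{abstract} scheme $X := \Spec S/I$, plus the standard dictionary between smooth morphisms of deformation functors and power series ring extensions.

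Write $B := \KK\pow{\al_{ij}^{k}}/\mfJ$; by the first part of the theorem it prorepresents the local Hilbert functor of $\AA^n$ at $[I]$, i.e.\ it is the completed local ring of $\Hilb_{n+1}^{n}$ at $[I]$. I would first check that the forgetful morphism $\pi$ — which sends an embedded deformation of $[I]\subset\AA^n$ to the abstract deformation of $X$ underlying it, and hence maps the local Hilbert functor to $\Def{X}$ — is smooth. Since $X$ is affine this is elementary: for a small extension $A'\onto A$, an embedded deformation $Z_A\subset\AA^n_A$, and an abstract deformation $\widetilde X$ of $Z_A$ over $A'$, flatness gives a surjection $\cO_{\widetilde X}\onto\cO_{Z_A}$; lifting the images of $x_1,\dots,x_n$ arbitrarily and applying Nakayama yields a surjection $A'[x_1,\dots,x_n]\onto\cO_{\widetilde X}$ restricting to the one over $A$, hence a lift of $Z_A$ to an embedded deformation over $A'$ in the prescribed abstract class (alternatively, this is Proposition~3.2.9 of Sernesi's \emph{Deformations of Algebraic Schemes}, using $H^1(X,T_{\AA^n}|_X)=0$). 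By Schlessinger's criteria $\Def{X}$ has a hull $R$ with miniversal formal deformation $\mathcal X\to\Spec R$; let $\rho\colon R\to B$ be the local homomorphism classifying the abstract deformation of $X$ underlying the universal Hilbert family over $B$. The general theory of smooth morphisms of functors of Artin rings (see e.g.\ Sernesi, \S2.3) then forces $\rho$ to exhibit $B$ as a power series ring $R\pow{u_1,\dots,u_m}$, where $m=\dim_\KK\ker(T\pi)$ and the $u_i$ may be chosen to be any lifts to $\frakm_B$ of a basis of the cokernel of the injective map $\frakm_R/\frakm_R^2\to\frakm_B/\frakm_B^2$.

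The remaining input is an explicit identification of $\ker(T\pi)\subseteq\Hom_S(I,S/I)$. On tangent spaces, $T\pi$ is the standard surjection $\Hom_S(I,S/I)\onto T^1_{S/I}$ whose kernel is the image of $\Der_\KK(S,S/I)\cong(S/I)^{\oplus n}$; since $x_mx_j=0$ in $S/I$, this image is spanned by the $n$ homomorphisms $\phi_k\colon x_ix_j\mapsto\delta_{ik}x_j+\delta_{jk}x_i$ coming from the coordinate vector fields $\partial_{x_k}$ on $\AA^n$. Equivalently, substituting $x_k\mapsto x_k+\epsilon$ in the family of the second part of the theorem — where at the base point all $\al_{ij}^{\ell}$, hence all $\gamma_{ij\ell}^{\ell}$, vanish, so the constant terms stay zero to first order — carries the generator $x_ix_j$ to $x_ix_j+\epsilon(\delta_{ik}x_j+\delta_{jk}x_i)$, the first-order deformation with $\al_{ij}^{\ell}=\epsilon(\delta_{ik}\delta_{j\ell}+\delta_{jk}\delta_{i\ell})$. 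Since $\mfJ\subseteq\frakm^2$, the $\al_{ij}^{k}$ are linear coordinates on $\Hom_S(I,S/I)$, and the vectors $\phi_1,\dots,\phi_n$ spanning $\ker(T\pi)$ satisfy $\al_{mm}^{m}(\phi_k)=2\delta_{mk}$; thus the $n\times n$ matrix $(\al_{mm}^{m}(\phi_k))_{m,k}$ equals $2$ times the identity, in particular invertible. Hence $m=n$, the functions $\al_{11}^{1},\dots,\al_{nn}^{n}$ restrict to a basis of $\ker(T\pi)^{\vee}$, the linear subspace $\{\al_{11}^{1}=\cdots=\al_{nn}^{n}=0\}$ of $\Hom_S(I,S/I)$ is complementary to $\ker(T\pi)$, and one may take $u_i=\al_{ii}^{i}$ in the presentation $B\cong R\pow{u_1,\dots,u_n}$.

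With this choice, the composite $R\xrightarrow{\rho}B\onto B/\langle\al_{11}^{1},\dots,\al_{nn}^{n}\rangle$ is an isomorphism, and restricting the abstract family $\rho^*\mathcal X$ along $B\onto B/\langle\al_{11}^{1},\dots,\al_{nn}^{n}\rangle$ recovers $\mathcal X$ up to this isomorphism; that is, the family of the second part of the theorem with all $\al_{ii}^{i}$ set to $0$, viewed abstractly, is a miniversal deformation of $X$, with base $\Spec R$. This is precisely the asserted statement. The one genuinely non-formal ingredient is the smoothness of $\pi$ — everything else is Schlessinger's machinery plus the short linear-algebra computation above — so that is the point I would treat most carefully, in particular the Nakayama surjectivity argument (equivalently, the vanishing $H^1(X,T_{\AA^n}|_X)=0$, which is immediate since $X$ is affine).
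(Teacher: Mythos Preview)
Your proposal addresses only the final sentence, taking the first two parts of the theorem as given; for that reduction your argument is correct and takes a genuinely different route from the paper. In \S\ref{sec:proof} the paper does not derive the miniversal statement from the Hilbert-scheme statement at all: it simply reruns the syzygy-lifting computation of Lemmas~\ref{lemma:relsat} and~\ref{lemma:order2} with the parameters $\al_{ii}^i$ set to zero from the outset, noting via Remark~\ref{rem:t1} that the remaining $\al_{ij}^k$ form a dual basis of $T^1_{S/I}$, so the identical obstruction analysis produces the hull directly. You instead invoke smoothness of the forgetful morphism $\pi$ (which the paper also cites, in \S\ref{sec:intro}) together with Schlessinger's formalism to present $B$ as a power series ring over the hull $R$ and then identify $R$ with $B/\langle \al_{11}^1,\dots,\al_{nn}^n\rangle$; the only honest computation left is the check that the matrix $(\al_{mm}^m(\phi_k))_{m,k}$ is invertible. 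Your route is more conceptual and portable --- it works whenever one already knows the local Hilbert scheme and wants the abstract hull --- while the paper's route is self-contained, avoids appealing to the general theory of smooth morphisms of deformation functors, and makes transparent that the \emph{same} quadrics $\gamma_{ijk}^\ell$ cut out both base spaces.
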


In addition to the two deformation-theoretic approaches to the local structure of $\Hilb_{n+1}^n$ mentioned above, we also approach the above theorem through Poonen's \emph{based algebras} \cite{poonen}. In fact, in our Theorem \ref{thm:Bn}, we are able to show that the generators of $\mfJ$ describe not just the completion of the local ring of $\Hilb_{n+1}^n$ at $[I]$ but cut out an open affine neighborhood of this point. We also use our explicit equations to construct linear subspaces of $\Hilb_{n+1}^n$ whose dimension is very close to the dimension of the whole Hilbert scheme (Corollary \ref{cor:linear}).

The local structure of $\Hilb_{n+1}^{n}$ at the point $[I]$ is of particular interest since this is the ``worst'' point of any Hilbert scheme of points. Indeed, any degree $n+1$ subscheme of $\AA^n$ may be degenerated to $\Spec S/I$ after change of coordinates, see Lemma \ref{lemma:worst}. On the other hand, the completion of the local ring of $\Hilb_d^{n}$ at any given point agrees up to a smooth factor with that of some point of $\Hilb_{m+1}^{m}$ for some sufficiently large $m$.
Indeed, the natural map of deformation functors from embedded deformations of a subscheme of $\AA^n$ to abstract deformations is smooth (\cite[Ex.~10.1]{hartshorne}) so 
we may re-embed any zero-dimensional degree $d$ scheme $X$ in $\AA^m$ with $d\leq m+1$ without changing the completion of the local ring of the Hilbert scheme at this point (up to smooth factor). Taking the union of $X$ with $m+1-d$ disjoint points gives a point in $\Hilb_{m+1}^m$ whose local ring again agrees up to completion and smooth factor with that of $\Hilb_d^n$ at $[X]$.  

Hilbert schemes of points are incredibly well studied. The scheme $\Hilb_{d}^2$ is smooth, see \cite{fogarty}. $\Hilb_{d}^n$ is irreducible for $d\leq 7$ and reducible for $d=8$ and $n\geq 4$ \cite{erman}. Local equations for $\Hilb_{4}^3$ are given in \cite[Example 18.31]{CCA}.
As far as we know, the equations we write down for $\Hilb_{5}^4$ are new.
See \cite{jelisiejew} for open problems in the area and many other references.

The remainder of this note is organized as follows. In \S \ref{sec:prelim} we consider preliminaries that we will refer to throughout the paper. The classical deformation theory approach is in \S \ref{sec:classical} while the approach via DGLAs is in \S \ref{sec:dgla}. These two sections may be read largely independently of each other, but we encourage the reader to make comparisons. To facilitate such comparisons, the reader may consult Table \ref{table:one} which helps translate between key steps in both approaches. Although we obtain the equations describing the local Hilbert scheme via both approaches, we provide the description of the universal family and full proof of Theorem \ref{thm:main} only via the classical approach (cf. \S\ref{sec:proof}).

In \S \ref{sec:algebra} we take our third approach to Theorem \ref{thm:main}, this time via deformations of based algebras \cite{poonen}. Finally, we discuss linear subspaces of $\Hilb_{n+1}^n$ in \S \ref{sec:linear}.

\begin{table}
	\begin{tabular}{l @{\qquad}c @ {\qquad} c }
	& Classical (\S\ref{sec:classical}) & DGLA (\S\ref{sec:dgla}) \\
\hline
\\
1st order deformations & $f^{(1)}(e_{\ell m})$ \eqref{eqn:t1} & $\phi(e_{\ell m})$ \eqref{phi:1} \\
\\
Lifted relations & $r^{(1)}$ \eqref{eqn:rel2} & $\phi(e_{ij}\wedge e_{ik})$ \eqref{phi:2} \\
\\
Lifted rels. satisfied & Lemma \ref{lemma:relsat} & $\phi$ is closed (Lemma \ref{lemma:relsat2})\\
\\
2nd order obstruction & Lemma \ref{lemma:order2} & Lemmas \ref{lemma:phi} and \ref{lemma:phi0}\\
	\\
	2nd order perturbation & $c_{\ell m}$ (Lemma \ref{lemma:order2}) & $\psi(e_{\ell m})$ (Lemma \ref{lemma:phi0})\\
	\\
\end{tabular}
\caption{Rosetta Stone for translating between classical and DGLA approaches}\label{table:one}
\end{table}

\subsection*{Acknowledgements} The authors thank Joachim Jelisiejew and Paolo Lella for helpful conversations.

NI was partially supported by NSERC.

FM is a member of the GNSAGA and is partially supported by the PRIN 20228JRCYB ``Moduli spaces and special varieties'', of ``Piano Nazionale di Ripresa e Resilienza, Next Generation EU''.

AP acknowledges funding from INdAM-GNSAGA and from the European Union - NextGenerationEU under the National Recovery and Resilience Plan (PNRR) - Mission 4 Education and research - Component 2 From research to business - Investment 1.1, Prin 2022, ``Geometry of algebraic structures: moduli, invariants, deformations'', DD N.~104, 2/2/2022, proposal code 2022BTA242 - CUP J53D23003720006.

\section{Preliminaries}\label{sec:prelim}
\subsection{The worst point of the Hilbert scheme}
The following is well-known to experts; we include short proofs for the convenience of the reader.

\begin{lemma}\label{lemma:initideal}Fix a graded term order on $S$.
Let $J$ be any ideal in $S$ cutting out a zero-dimensional scheme of degree $n+1$.
Then the initial ideal of $J$ is $I$ if and only if the residue classes of $1,x_1,\ldots,x_n$ form a basis of $S/J$.
\end{lemma}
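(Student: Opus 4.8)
The plan is to argue via standard properties of Gröbner bases / initial ideals and the fact that monomials outside the initial ideal give a vector space basis of the quotient. First I would recall that for any ideal $J \subset S$ and any term order, the residue classes of the monomials not lying in $\mathrm{in}(J)$ form a $\KK$-basis of $S/J$ (Macaulay's basis theorem). In particular, $S/J$ and $S/\mathrm{in}(J)$ have the same Hilbert function, and since $J$ cuts out a degree $n+1$ scheme, $\dim_\KK S/J = n+1$, so exactly $n+1$ monomials lie outside $\mathrm{in}(J)$.

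For the ``if'' direction, suppose $1, x_1, \dots, x_n$ map to a basis of $S/J$. Since we use a graded term order and these $n+1$ elements are already monomials forming a basis, I would show the standard monomials of $J$ (those outside $\mathrm{in}(J)$) must be exactly $1, x_1, \dots, x_n$: the set of standard monomials is closed under taking divisors, it has cardinality $n+1$, and it must span $S/J$; since $1,x_1,\dots,x_n$ are linearly independent in $S/J$ and there are $n+1$ standard monomials, and every $x_i$ is itself either standard or has a standard proper divisor (only $1$), the standard monomials are forced to be precisely $\{1, x_1, \dots, x_n\}$. Hence $\mathrm{in}(J)$ is the monomial ideal whose standard monomials are $\{1,x_1,\ldots,x_n\}$, which is exactly $I = \langle x_1,\ldots,x_n\rangle^2$ (its complement in the monomial basis of $S$ is $\{1, x_1,\ldots,x_n\}$). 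For the ``only if'' direction, if $\mathrm{in}(J) = I$ then the standard monomials are by definition the monomials not in $I$, namely $1, x_1, \dots, x_n$, and these then form a basis of $S/J$ by Macaulay's theorem.

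The only mild subtlety — and the step I would be most careful about — is the bookkeeping that a monomial ideal with exactly $n+1$ standard monomials, one of which is $1$ and which must contain all the $x_i$ among them (forced because each $x_i$'s only proper divisor is $1$, so $x_i$ cannot be in $\mathrm{in}(J)$ without its complement losing the span), is necessarily equal to $I$; this is really just the observation that $\{$monomials$\} \setminus \{1,x_1,\dots,x_n\}$ consists precisely of the degree-$\geq 2$ monomials, which generate $I$. Everything else is a direct application of Macaulay's standard monomial basis theorem together with the degree-$n+1$ hypothesis.
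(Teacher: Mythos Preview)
Your ``only if'' direction is fine and matches the paper. The ``if'' direction has a genuine gap: you assert that the standard monomials are forced to be exactly $\{1,x_1,\dots,x_n\}$, but your justification --- that each $x_i$ ``is either standard or has a standard proper divisor (only $1$)'' and that otherwise ``its complement los[es] the span'' --- does not actually establish that each $x_i$ is standard. The divisor-closure property of the standard set only tells you that \emph{if} $x_i$ is standard then so is $1$; it says nothing in the other direction. And the standard monomials always span $S/J$ by Macaulay's theorem, so ``losing the span'' is not the obstruction.

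What is missing is an explicit use of the \emph{graded} hypothesis, which is essential here. Without it the statement is false: for $n=2$ with lex order $x>y$ and $J=\langle x-y^2,\,y^3\rangle$, the classes of $1,x,y$ form a basis of $S/J$, yet $\mathrm{in}(J)=\langle x,y^3\rangle\neq I$. The correct argument is: if $x_i\in\mathrm{in}(J)$, then there is some $f\in J$ with leading term $x_i$; since the order is graded, every other term of $f$ has degree $\leq 1$, so $f$ gives a nontrivial $\KK$-linear relation among $1,x_1,\dots,x_n$ in $S/J$, contradicting the basis hypothesis. Once you know all the $x_i$ (and $1$) are standard, the cardinality count finishes it. The paper takes the dual and slightly cleaner route: it writes $x_ix_j-\sum_k c_k x_k - c_0\in J$ using the basis hypothesis, observes that for a graded order the leading term of this element is $x_ix_j$, hence $I\subseteq\mathrm{in}(J)$, and then concludes equality by comparing lengths.
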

\begin{proof}
Suppose that the initial ideal of $J$ is $I$. Since $1,x_1,\ldots,x_n$ are the standard monomials of $I$, they form a basis for $S/J$.

Conversely, suppose that the residue classes of $x_0:=1,x_1,\ldots,x_n$ form a basis of $S/J$. Then for any element $y\in S$, there exist $c_0,\ldots,c_n\in \KK$ such that 
\[
f_y:=y-\sum c_i x_0\in J.
\]
Taking $y=x_ix_j$ for $1\leq i,j\leq n$, we obtain that the initial form of $f_y$ is $x_ix_j$, and so $I$ is contained in the initial ideal of $J$. Since both $S/I$ and $S/J$ have degree $n+1$, we conclude $I$ is the initial ideal of $J$.
\end{proof}

\begin{lemma}\label{lemma:worst}
Let $J$ be any ideal in $S$ cutting out a zero-dimensional scheme of degree $n+1$. After change of coordinates, there is a Gr\"obner degeneration from $\Spec S/J$ to $\Spec S/I$. 
\end{lemma}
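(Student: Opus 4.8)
The plan is to reduce to Lemma~\ref{lemma:initideal} by exhibiting coordinates after which $1, x_1, \dots, x_n$ become a basis of $S/J$. First I would observe that $S/J$ is a finite-dimensional $\KK$-algebra of vector-space dimension $n+1$ (this is what degree $n+1$ means for a zero-dimensional scheme over $\KK$). Since it is local in the cases we care about — or more generally Artinian, hence a product of local rings — its maximal ideals correspond to the points of the scheme; after a translation of $\AA^n$ we may assume the origin is one of these points, so that $\frakm = \langle x_1, \dots, x_n\rangle$ maps into a maximal ideal $\overline{\frakm}$ of $S/J$ with $1$-dimensional residue field $\KK$. The key numerical point is then to control $\dim_\KK \overline{\frakm}/\overline{\frakm}^2$: since $S/J$ has total dimension $n+1$ and the residue field contributes $1$, we have $\dim_\KK \overline{\frakm} \leq n$, and hence $\dim_\KK \overline{\frakm}/\overline{\frakm}^2 \leq n$ as well.

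Next I would perform a linear change of coordinates on $x_1, \dots, x_n$ so that the images of (a subset of) the $x_i$ span $\overline{\frakm}/\overline{\frakm}^2$; concretely, choose a $\KK$-basis of $\overline{\frakm}/\overline{\frakm}^2$, lift it to elements that are $\KK$-linear combinations of $x_1, \dots, x_n$, and complete to a basis of the span of all the $x_i$ — this is possible precisely because $\dim \overline{\frakm}/\overline{\frakm}^2 \leq n$. After this change of coordinates, Nakayama's lemma shows $x_1, \dots, x_n$ generate $\overline{\frakm}$ as an ideal; combined with the dimension count $\dim_\KK S/J = n+1 = \dim_\KK \KK\langle 1, x_1,\dots,x_n\rangle$, one sees that $1, x_1, \dots, x_n$ must in fact be linearly independent in $S/J$ — any linear dependence would force $\dim_\KK S/J < n+1$, since $\overline{\frakm}$ is spanned by the images of the $x_i$. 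Hence $1, x_1, \dots, x_n$ is a basis of $S/J$.

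With the basis in hand, Lemma~\ref{lemma:initideal} applies directly (for any fixed graded term order on $S$): the initial ideal of $J$ in the new coordinates equals $I$. Since passing to an initial ideal with respect to a term order is realized by a Gröbner degeneration — a flat family over $\AA^1$ with general fiber $\Spec S/J$ and special fiber $\Spec S/\mathrm{in}(J) = \Spec S/I$ — this gives the desired degeneration after change of coordinates. The main obstacle is the dimension bookkeeping in the middle step: one must be slightly careful that the Artinian algebra $S/J$ need not be local (the subscheme may be supported at several points of $\AA^n$), so the clean statement ``$\dim \overline{\frakm} \leq n$'' at one chosen point, and the passage from generators of $\overline{\frakm}$ to a basis of $S/J$, rely on the global dimension count $\dim_\KK S/J = n+1$ rather than on any local structure theory; once that is phrased correctly everything else is formal.
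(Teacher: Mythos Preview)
Your argument has a genuine gap in the middle step. You correctly note that the images of $x_1,\dots,x_n$ generate $\overline{\frakm}$ \emph{as an ideal} of $S/J$ --- indeed this is automatic once $J\subseteq\frakm$, since $\frakm=\langle x_1,\dots,x_n\rangle$ surjects onto $\overline{\frakm}=\frakm/J$, so Nakayama is not really needed. But you then slide to the assertion that ``$\overline{\frakm}$ is spanned by the images of the $x_i$'' in the sense of $\KK$-linear span, and from this conclude that $1,x_1,\dots,x_n$ span $S/J$ as a $\KK$-vector space. That implication fails whenever $\overline{\frakm}^2\neq 0$. Concretely, for $n=3$ take $J=\langle x_1^4, x_2, x_3\rangle$: then $S/J\cong\KK[x_1]/(x_1^4)$ has dimension $4=n+1$ and the origin is the unique point, yet $1,x_1,x_2,x_3$ span only the $2$-dimensional subspace $\KK\cdot 1+\KK\cdot x_1$. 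No affine-linear change of coordinates repairs this, since any such change preserves the $\KK$-span of $1,x_1,\dots,x_n$ inside $S/J$.

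The paper's proof avoids this by allowing \emph{polynomial} automorphisms of $\AA^n$ rather than only affine-linear ones. Inductively, if $1,x_1,\dots,x_k$ are linearly independent in $S/J$ but $1,x_1,\dots,x_{k+1}$ are not, one picks any $y\in S$ with $1,x_1,\dots,x_k,y$ independent and performs the (generally nonlinear) coordinate change $x_{k+1}\mapsto x_{k+1}+y$. In the example above one may take $y=x_1^2$ and then $y=x_1^3$. Once $1,x_1,\dots,x_n$ form a basis, Lemma~\ref{lemma:initideal} applies exactly as you describe.
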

\begin{proof}
First we perform a change of coordinates so that the residue classes of $1,x_1,\ldots,x_n$ form a basis of $S/J$. To see that this is possible,
let $k\geq 0$ be the maximum such that the classes of $1,x_1,\ldots,x_k$ in $S/J$ are linearly independent. If $k\neq n$, let $y\in S$ be such that 
$1,x_1,\ldots,x_k,y$ are linearly independent in $S/J$; this exists since $\dim_\KK S/J=n+1$. Performing the coordinate change $x_{k+1}\mapsto x_{k+1}+y$, we may obtain that the classes of $1,x_1,\ldots,x_{k+1}$ are linearly independent. Thus, by induction we may perform a coordinate change so that the classes of $1,x_1,\ldots,x_n$ in $S/J$ are linearly independent, hence form a basis as desired.

To conclude the proof of the lemma, we now apply Lemma \ref{lemma:initideal} to obtain that after change of coordinates, $I$ is an initial ideal of $J$. This gives a Gr\"obner degeneration from $\Spec S/J$ to $\Spec S/I$.
\end{proof}
\subsection{Relations among $\gamma$}
We record some easy relations among the $\gamma_{ijk}^{\ell}$ (see \eqref{eqn:gamma}).
Firstly, it is immediate from the definition that
\begin{equation}\label{eqn:gamma1}
	\gamma_{ijk}^{\ell}=-\gamma_{ikj}^{\ell}.
\end{equation}
It is similarly straightforward to verify that 
\begin{equation}
	\gamma_{ijk}^{\ell}+\gamma_{jki}^{\ell}+\gamma_{kij}^{\ell}=0.\label{eqn:gamma2}
\end{equation}

From these relations, we obtain:
\begin{lemma}\label{lemma:gens}
	The generators of $\mfJ$ of the form
\[
	\gamma_{ijk}^{k}-\gamma_{ij\ell}^{\ell}
\]
for $1\leq i,j,k,\ell,\leq n$, $j$ distinct from $k,\ell$ may be replaced by
\[
	\gamma_{ijk}^{k}-\gamma_{ji\ell}^{\ell}
\]
with
$1\leq i,j,k,\ell\leq n,\ j\neq k,\ i\neq \ell$.
\end{lemma}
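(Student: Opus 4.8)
The plan is to work modulo the subideal $\mfJ'\subseteq\KK\pow{\al_{ij}^k}$ generated by the first family $\{\gamma_{ijk}^{\ell}:j,k,\ell\text{ pairwise distinct}\}$, which occurs unchanged in both generating sets. It then suffices to show that, modulo $\mfJ'$, each ``old'' generator $\gamma_{ijk}^{k}-\gamma_{ij\ell}^{\ell}$ (with $j$ distinct from $k,\ell$) lies in the ideal generated by the ``new'' generators $\gamma_{ijk}^{k}-\gamma_{ji\ell}^{\ell}$ ($j\neq k$, $i\neq\ell$), and vice versa.

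The key input is a single identity. Setting $\ell=k$ in \eqref{eqn:gamma2} and using \eqref{eqn:gamma1} in the form $\gamma_{jki}^{k}=-\gamma_{jik}^{k}$ yields
\[
\gamma_{ijk}^{k}-\gamma_{jik}^{k}=-\gamma_{kij}^{k}.
\]
When $i,j,k$ are pairwise distinct the right-hand side is literally one of the generators of $\mfJ'$, so $\gamma_{ijk}^{k}\equiv\gamma_{jik}^{k}\pmod{\mfJ'}$. Hence, if $i$ is distinct from both $j$ and $\ell$ (and $j\neq\ell$), then $\gamma_{ij\ell}^{\ell}\equiv\gamma_{ji\ell}^{\ell}\pmod{\mfJ'}$, which identifies the old generator $\gamma_{ijk}^{k}-\gamma_{ij\ell}^{\ell}$ with the new generator $\gamma_{ijk}^{k}-\gamma_{ji\ell}^{\ell}$ modulo $\mfJ'$; and if $i=j$ the two generators coincide on the nose. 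This settles all old generators with $i\neq\ell$ and all new generators with $j\neq\ell$.

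Two boundary cases remain. For an old generator $\gamma_{ijk}^{k}-\gamma_{iji}^{i}$ (the case $i=\ell$): we may assume $k\neq i$, since otherwise it vanishes, so $i,j,k$ are pairwise distinct, and then the displayed identity rewrites it as $-\gamma_{kij}^{k}+\bigl(\gamma_{jik}^{k}-\gamma_{iji}^{i}\bigr)$, where $\gamma_{kij}^{k}\in\mfJ'$ and $\gamma_{jik}^{k}-\gamma_{iji}^{i}$ is a new generator. For a new generator $\gamma_{ijk}^{k}-\gamma_{jij}^{j}$ (the case $\ell=j$, so $j\neq k$ and $i\neq j$): choose an index $m\notin\{i,j\}$, which exists since $n\geq3$, and telescope,
\[
\gamma_{ijk}^{k}-\gamma_{jij}^{j}=\bigl(\gamma_{ijk}^{k}-\gamma_{jim}^{m}\bigr)+\bigl(\gamma_{jim}^{m}-\gamma_{jij}^{j}\bigr),
\]
where the first bracket is a new generator with $i\neq m$ and $j\neq m$, already handled by the previous paragraph, and the second bracket is an old generator. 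This closes both inclusions, so the two sets of elements generate the same ideal $\mfJ$.

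The only real difficulty is the bookkeeping: at each step one must track which index coincidences are allowed in the family under consideration, and verify that the auxiliary elements $\gamma_{kij}^{k}$ genuinely lie in $\mfJ'$ (i.e.\ that $i,j,k$ are pairwise distinct) and that an auxiliary index $m$ with the required distinctness exists. Each individual verification is trivial, but there are several small cases to keep straight.
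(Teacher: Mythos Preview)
Your proof is correct and uses the same ingredients as the paper's: the antisymmetry~\eqref{eqn:gamma1}, the cyclic relation~\eqref{eqn:gamma2}, and an auxiliary index (your $m$, the paper's $\lambda$) to telescope. The organization differs slightly---you first isolate the swap congruence $\gamma_{ijk}^{k}\equiv\gamma_{jik}^{k}\pmod{\mfJ'}$ and then treat the boundary cases $i=\ell$ and $\ell=j$ separately, whereas the paper writes down a single telescoping identity in each direction---but the underlying argument is essentially the same.
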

\begin{proof}
For $i=j$ the two types of generators agree. We suppose that $i\neq j$. Then on the one hand, for $j$ distinct from $k,\ell$ we may write
\[
	\gamma_{ijk}^{k}-\gamma_{ij\ell}^{\ell}= (\gamma_{ijk}^{k}-\gamma_{ji\lambda}^{\lambda})-(\gamma_{ij\ell}^{\ell}-\gamma_{ji\lambda}^{\lambda})
\]
for any $\lambda\neq i$.
On the other hand, for $j\neq k$ and $\ell\neq i$ we have
\begin{align*}
	\gamma_{ijk}^{k}-\gamma_{ji\ell}^{\ell}&=\gamma_{ijk}^{k}-\gamma_{ji\ell}^\ell+\gamma_{ji\lambda}^\lambda-\gamma_{ji\lambda}^\lambda\\
	&=(\gamma_{ji\lambda}^\lambda-\gamma_{ji\ell}^\ell)+\gamma_{ijk}^k+\gamma_{i\lambda j}^{\lambda}+\gamma_{\lambda j i}^{\lambda}\\
	&=(\gamma_{ji\lambda}^\lambda-\gamma_{ji\ell}^\ell)+(\gamma_{ijk}^k-\gamma_{ij\lambda}^{\lambda})+\gamma_{\lambda j i}^{\lambda}\\
\end{align*}
for any $\lambda$, using \eqref{eqn:gamma2} for the second equality and \eqref{eqn:gamma1} for the third.
Choosing $\lambda \neq i, j$ we see that this belongs to $\mfJ$.
\end{proof}

\subsection{The Taylor complex}\label{sec:taylor}
Recall that we have fixed an integer $n \geq 3$. Consider
\begin{align*}
	p:= n + {n \choose 2}, \qquad q:= {p \choose 2}.
\end{align*}
Our ideal $I=\langle x_1,\ldots x_n\rangle $ of $S = \KK[x_1,\dots,x_n]$ has exactly $p$ generators, namely $x_i^2$, for $1 \leq i \leq n$, and $x_i x_j$, for $1 \leq i < j \leq n$.

The relations among these generators are given by the Taylor complex, which is a free resolution of $S/I$ by finite free $S$-modules \cite[\S4]{CCA}. Actually we are interested only in the part of cohomological degrees\footnote{Throughout we use cohomological conventions, that is, the differential increases the degree by $1$.} $\geq -2$
\begin{equation} \label{eq:Taylor_resolution}
S^{ q} \overset{r}\longrightarrow
S^{ p} \overset{f}\longrightarrow S \longrightarrow 0
\end{equation}
which we describe below.

The elements of the standard $S$-basis of $S^{ p}$ are denoted by $e_{ij}$, where $1 \leq i \leq j \leq n$.
If $1 \leq j < i \leq n$ then we set $e_{ij} := e_{ji}$.
The $S$-linear homomorphism $f$ maps $e_{ij}$ to $f_{ij} := x_i x_j \in S$ for all $1 \leq i,j \leq n$.
We may identify $S^{ q}$ with $\bigwedge^2 S^{ p}$. Under this identification, the $S$-linear homomorphism $r$ is given by
\begin{equation} \label{eq:r}
	r(e_{ij}\wedge e_{k\ell})=\frac{-x_kx_\ell e_{ij}+x_ix_je_{k\ell}}{\prod_{\lambda\in \{i,j\}\cap\{k,\ell\}} x_\lambda}
\end{equation}
	When $\{i,j\}$ and $\{k,\ell\}$ are disjoint, $r(e_{ij}\wedge e_{k\ell})$ is just the Koszul relation between $e_{ij}$ and $e_{k\ell}$. However, when 
$\{i,j\}$ and $\{k,\ell\}$ intersect, $r(e_{ij}\wedge e_{k\ell})$ is obtained from the Koszul relation by dividing by a common variable.

The ideal $I$ is $\ZZ$-graded (with generators in degree $2$); the Taylor complex we consider above is also $\ZZ$-graded. This grading will be inherited by most subsequent objects we consider.
\subsection{The tangent space}
The tangent space at $[I]$ of $\Hilb_{n+1}^n$ is given by first order embedded deformations of $\Spec S/I$. These are in turn in bijection with elements of 
$\Hom_S(I,S/I)$ (see e.g.~\cite[\S1.2]{hartshorne}). 

\begin{lemma}
For every $1 \leq i,j,k \leq n$ the assignment
\[
\theta^{ij}_{k} \colon x_\ell x_m \mapsto \begin{cases}
x_k + I &\quad \text{if } \{\ell,m\} = \{i,j\} \\
0 + I &\quad \text{if } \{\ell,m\} \neq \{i,j\},
\end{cases}
\]
for all $1 \leq \ell,m \leq n$,
extends to a unique $S$-linear homomorphism $\theta^{ij}_{k} \colon I \to S/I$.
\end{lemma}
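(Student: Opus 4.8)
The plan is to exploit the presentation of $I$ coming from the Taylor complex \eqref{eq:Taylor_resolution}. Uniqueness is immediate: the elements $x_\ell x_m$ with $1\leq \ell\leq m\leq n$ generate $I$ as an $S$-module, so an $S$-linear map out of $I$ is determined by its values on them. For existence, the first step is to define an $S$-linear map $\widetilde\theta\colon S^p\to S/I$ on the standard basis by $e_{\ell m}\mapsto x_k+I$ if $\{\ell,m\}=\{i,j\}$ and $e_{\ell m}\mapsto 0+I$ otherwise; this is well-posed since $S^p$ is free (and the convention $e_{\ell m}=e_{m\ell}$ is compatible with indexing by the \emph{set} $\{\ell,m\}$). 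Because the Taylor complex is exact at $S^p$, the surjection $f$ induces an isomorphism $S^p/\mathrm{im}\,r\xrightarrow{\ \sim\ }I$, so $\widetilde\theta$ descends along $f$ — yielding the sought extension $\theta^{ij}_k\colon I\to S/I$ — exactly when $\widetilde\theta\circ r=0$. Since $\mathrm{im}\,r$ is generated as an $S$-module by the elements $r(e_{ab}\wedge e_{cd})$, it is enough to verify that $\widetilde\theta\bigl(r(e_{ab}\wedge e_{cd})\bigr)=0$ for all $1\leq a,b,c,d\leq n$.

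For this last point I would use two observations. Writing $\mathfrak m=\langle x_1,\ldots,x_n\rangle$, we have $\mathfrak m^2=I$, hence $\mathfrak m\cdot(\mathfrak m/I)=0$ inside $S/I$; and by construction $\widetilde\theta$ carries every basis vector $e_{\ell m}$ into $\mathfrak m/I\subseteq S/I$. It therefore suffices to observe that, after performing the cancellation in formula \eqref{eq:r}, every coefficient of $r(e_{ab}\wedge e_{cd})$ — written as an $S$-linear combination of the $e_{\ell m}$ — lies in $\mathfrak m$. Indeed, when $\{a,b\}\cap\{c,d\}=\emptyset$ the nonzero coefficients are $\pm x_ax_b$ and $\pm x_cx_d$, which lie in $\mathfrak m^2\subseteq\mathfrak m$; and when the intersection is a single index, dividing by the corresponding variable leaves every coefficient equal to a single variable $x_\bullet\in\mathfrak m$ (the case $\{a,b\}=\{c,d\}$ gives $r(e_{ab}\wedge e_{cd})=0$). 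Hence $\widetilde\theta\bigl(r(e_{ab}\wedge e_{cd})\bigr)\in\mathfrak m\cdot(\mathfrak m/I)=0$, and we are done.

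I do not anticipate a genuine obstacle: this is an elementary check, and the only care required is bookkeeping — formula \eqref{eq:r} must be read allowing the degenerate possibilities (e.g.\ $i=j$, or coincidences among $a,b,c,d$), all of which are handled uniformly by phrasing conditions in terms of sets. As an alternative one could avoid the Taylor complex altogether and cite the standard fact that the syzygy module of a monomial ideal is generated by the ``Koszul-type'' syzygies between pairs of generators, which are precisely the $r(e_{ab}\wedge e_{cd})$; but since the Taylor complex is already in place (see \S\ref{sec:taylor}), using it directly is cleanest.
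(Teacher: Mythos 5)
The proposal is correct and takes essentially the same route as the paper: uniqueness because the $x_\ell x_m$ generate $I$, and existence by checking that the generators $r(e_{ab}\wedge e_{cd})$ of the syzygy module are killed, via a degree argument exploiting $I=\mathfrak m^2$. Your phrasing (coefficients of $r(e_{ab}\wedge e_{cd})$ lie in $\mathfrak m$, values of $\widetilde\theta$ lie in $\mathfrak m/I$, product lands in $\mathfrak m^2/I=0$) is a slightly more structured repackaging of the paper's observation that the resulting binomial has degree $2$ or $3$ and hence vanishes in $S/I$, but it is the same idea.
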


\begin{proof}
The uniqueness is clear because the $x_\ell x_m$ generate $I$ as an $S$-module.
To prove the existence it is enough to show that the relations among these generators of $I$ are mapped to zero by $\theta^{ij}_{k}$.
Such relations correspond to elements in the image of $r \colon S^{ q} \to S^{p}$; it suffices to consider elements of the form $r(e_{ab}\wedge e_{cd})$. By definition of $r$, the image of $r(e_{ab}\wedge e_{cd})$ under $\theta^{ij}_k$ would be the residue class of a homogeneous binomial of degree either $3$ or $2$ in the variables $x_1,\dots,x_n$, and such binomials are zero in $S/I$.
\end{proof}

\begin{proposition}\label{prop:theta}
	The set $\{ \theta^{ij}_k \mid 1 \leq i,j,k \leq n, \ i \leq j \}$ is a $\KK$-basis of $\Hom_S(I,S/I)$. In particular, $\Hom_S(I,S/I)$ is concentrated in degree $-1$.
\end{proposition}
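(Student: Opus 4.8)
The plan is to compute $\Hom_S(I,S/I)$ directly from the free resolution \eqref{eq:Taylor_resolution}, using that $\Hom_S(I,S/I)$ is the kernel of the induced map $\Hom_S(S^p,S/I)\to\Hom_S(S^q,S/I)$. First I would observe that $S/I = \KK\oplus\bigoplus_{k=1}^n \KK x_k$ is concentrated in degrees $0$ and $1$, while $I$ is generated in degree $2$; hence any graded $S$-linear map $I\to S/I$ must lower degree, and since the target has no elements of negative degree, such a map is forced to send each degree-$2$ generator $x_ix_j$ into the degree-$\leq 0$ part, i.e.\ into $\KK\cdot 1\subset S/I$ only if it has degree $-2$, or into $\bigoplus_k\KK x_k$ if it has degree $-1$. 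I would then rule out degree $-2$: a degree $-2$ homomorphism would send $x_ix_j\mapsto c_{ij}\in\KK$, and applying it to a Koszul-type relation $r(e_{ij}\wedge e_{k\ell})$, whose entries are degree $1$, would produce a degree $-1$ element of $S/I$ from a degree $1$ syzygy — concretely $x_k x_\ell$ times a constant minus $x_i x_j$ times a constant must be killed, forcing all $c_{ij}=0$ as soon as $n\geq 2$. So $\Hom_S(I,S/I)$ is concentrated in degree $-1$, which is the "in particular" clause.

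Next I would show the $\theta^{ij}_k$ span. Any degree $-1$ element $\varphi\in\Hom_S(I,S/I)$ is determined by the images $\varphi(x_ix_j)\in\bigoplus_k\KK x_k\subset S/I$ (the only degree-$1$ part of the target), say $\varphi(x_ix_j)=\sum_k \lambda^{ij}_k x_k$; then $\varphi = \sum_{i\leq j,k}\lambda^{ij}_k\theta^{ij}_k$ by construction, so the $\theta^{ij}_k$ generate. The content is linear independence: the $\theta^{ij}_k$ are visibly linearly independent as elements of the \emph{space of all set maps} from the generating set to $S/I$, since $\theta^{ij}_k$ sends $x_ix_j$ to $x_k$ and all other generators to $0$, and the $x_k$ are part of a basis of $S/I$. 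Because the preceding lemma already guarantees each $\theta^{ij}_k$ genuinely \emph{lies in} $\Hom_S(I,S/I)$ (the relations are killed), linear independence in the ambient space of maps-on-generators immediately gives linear independence in $\Hom_S(I,S/I)$. Combined with spanning, this proves the set is a basis.

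In more detail, the one genuine verification is that a degree $-1$ map is determined by its values on the generators in the claimed way — i.e.\ that once $\varphi(x_ix_j)$ is specified in $\bigoplus_k\KK x_k$ for all $i\leq j$, there is at most one $S$-linear extension. This is exactly the uniqueness statement already proved in the previous lemma (the $x_ix_j$ generate $I$ as an $S$-module, so any $S$-linear map is determined by its values on them), so no new work is needed. I would therefore structure the proof as: (1) grading argument forcing concentration in degree $-1$ (ruling out degree $-2$ and lower via the degree-$1$ syzygies); (2) in degree $-1$, the evaluation-on-generators map $\Hom_S(I,S/I)\to\bigoplus_{i\leq j}(S/I)_1 \cong \bigoplus_{i\leq j}\KK^n$ is injective by uniqueness of $S$-linear extension, and the $\theta^{ij}_k$ map to the standard basis vectors, hence are linearly independent; (3) they also span since every $\varphi$ is recovered as $\sum\lambda^{ij}_k\theta^{ij}_k$. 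The main (and only mild) obstacle is step (1): one must be a little careful that a hypothetical constant-valued (degree $-2$) homomorphism is killed, which follows by testing it against a single disjoint Koszul relation $r(e_{ij}\wedge e_{k\ell})$ with $\{i,j\}\cap\{k,\ell\}=\emptyset$ — available since $n\geq 3$ — whose image $-x_kx_\ell c_{ij}+x_ix_j c_{k\ell}$ in degree $1$ of $S/I$ is zero only if $c_{ij}=c_{k\ell}=0$.
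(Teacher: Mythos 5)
Your overall structure is sound — split by internal degree, handle degree $-1$ by evaluation on generators, rule out degree $-2$ by testing a syzygy — but your step~(1) contains a genuine error. You propose to kill a hypothetical degree $-2$ map $\varphi$, with $\varphi(x_ix_j)=c_{ij}\in\KK$, by applying it to a \emph{disjoint} Koszul relation $r(e_{ij}\wedge e_{k\ell})=-x_kx_\ell e_{ij}+x_ix_j e_{k\ell}$ with $\{i,j\}\cap\{k,\ell\}=\emptyset$. This gives no constraint: the image $-x_kx_\ell\cdot c_{ij}+x_ix_j\cdot c_{k\ell}$ lies in $I$ (both $x_kx_\ell$ and $x_ix_j$ are generators of $I$), so it is automatically $0$ in $S/I$. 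Degree bookkeeping tells you the same thing: that relation is homogeneous of degree $4$, so a degree $-2$ map lands in $(S/I)_2=0$. Koszul relations between generators of $I$ never constrain maps $I\to S/I$, which is precisely why the paper treats them as ``lifted trivially'' (Remark~\ref{rem:koszul}).

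The relations that do the work are the \emph{overlapping} ones $r(e_{ij}\wedge e_{ik})=-x_ke_{ij}+x_je_{ik}$ for $j\neq k$, which are homogeneous of degree $3$. Applying $\varphi$ yields $-c_{ij}x_k+c_{ik}x_j$ in $(S/I)_1$, and since $x_j,x_k$ are linearly independent there, this forces $c_{ij}=c_{ik}=0$; varying $i,j,k$ kills all the $c_{ij}$. With this repair your argument goes through, and in fact requires only $n\geq 2$, not $n\geq 3$ as you said. Note also that the paper's own proof avoids the resolution entirely: after subtracting a linear combination of the $\theta^{ij}_k$, it reduces to the constant case and uses bare $S$-linearity $x_\ell\cdot\theta(x_ix_m)=x_i\cdot\theta(x_\ell x_m)$ together with linear independence of $x_i,x_\ell$ in $S/I$ to kill the constants — a shorter route to the same conclusion without ever invoking the Taylor complex. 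Your degree $-1$ spanning/independence analysis (steps (2)–(3)) is correct and matches the paper's in substance.
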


\begin{proof}
	This follows from the fact that the residue classes of $1,x_1,\ldots,x_n$ form a $\KK$-basis of $S/I$. Indeed, this implies that the $\theta^{ij}_k$'s are $\KK$-linearly independent. Moreover, consider an arbitrary $\theta \in\Hom_S(I,S/I)$. After reducing modulo the $\theta^{ij}_k$, we may assume that $\theta$ is such that for all $1\leq \ell,m\leq n$,
\begin{equation*}
\theta (x_\ell x_m) = b_{\ell m}+I
\end{equation*}
with $b_{\ell m}\in \KK$.

Since $n \geq 2$ we can choose $i \in \{1,\dots,n \}$ such that $i \neq \ell$.
From the fact that $\theta$ is $S$-linear we obtain
\begin{equation*}
b_{im} x_\ell +I = x_\ell \cdot \theta (x_i x_m) = \theta(x_i x_\ell x_m) = x_i \cdot \theta(x_\ell x_m) = b_{\ell m} x_i +I
\end{equation*}
and conclude that $b_{\ell m} = 0$.
Therefore we have proved $b_{\ell m} = 0$ for all $\ell,m$.
It follows that arbitrary $\theta$ is in the span of the $\theta^{ij}_k$.
\end{proof}

\begin{remark}\label{rem:t1}
	The tangent space $T^1_{S/I}$ of a miniversal deformation of $\Spec S/I$ is given by isomorphism classes of first order deformations of $\Spec S/I$, which are in bijection with the quotient of $\Hom_S(I,S/I)$ by the image of the natural map
\[
\Der_\KK(S,S/I)\to \Hom_S(I,S/I),
\]
cf.~\cite[Corollary 5.2]{hartshorne}.

By looking at its action on the generators of $I$, we see that in our setting, the image of $\partial/\partial x_i$ in $\Hom_S(I,S/I)$ is exactly
$\sum_{j=1}^n \theta^{ij}_j.$ It follows that a basis of $T^1_{S/I}$ is given by the classes of $\theta^{ij}_k$ with $1\leq i,j,k\leq n$, $i\leq j$, and not all $i,j,k$ equal. As is the case for $\Hom_S(I,S/I)$, $T^1_{S/I}$ is concentrated in degree $-1$.
Its dimension is exactly $(n+2)n(n-1)/2$.
\end{remark}
\subsection{Obstructions}
Let $R$ denote the image of $r$ in $S^p$, with $R_0$ the submodule generated by Koszul relations.

\begin{lemma}\label{lemma:obstruction} The $S$-module $\Hom_S(R/R_0,S/I)$ has no homogeneous elements of degree less than $-2$.
\end{lemma}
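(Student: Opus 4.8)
The plan is to exploit a degree mismatch: $R/R_0$ is generated in degree $3$, whereas $S/I=(S/I)_0\oplus(S/I)_1$ is concentrated in degrees $0$ and $1$ (because $I=\langle x_1,\dots,x_n\rangle^2$). A homogeneous homomorphism of very negative degree therefore has nowhere to send the generators, and the single borderline degree is killed by a syzygy relation already visible in the Taylor complex.

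\emph{Reduction to degree $-3$.} The generators $f_{ij}=x_ix_j$ of $I$ have degree $2$, so the basis vectors $e_{ij}$ of $S^p$ have degree $2$. Reading off \eqref{eq:r}: when $\{i,j\}\cap\{k,\ell\}=\emptyset$ the element $r(e_{ij}\wedge e_{k\ell})$ is a Koszul relation and lies in degree $4$; when the two pairs overlap (necessarily in a single index $\lambda$) it is obtained from a degree-$4$ expression by dividing by $x_\lambda$, hence lies in degree $3$. Since $R$ is the $S$-module generated by the $r(e_{ij}\wedge e_{k\ell})$ and $R_0$ contains every Koszul relation, the quotient $R/R_0$ is generated by the classes of those $r(e_{ij}\wedge e_{k\ell})$ with $\{i,j\}\cap\{k,\ell\}\neq\emptyset$, all of which have degree $3$. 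Consequently any homogeneous $\phi\colon R/R_0\to S/I$ of degree $d$ maps these degree-$3$ generators into $(S/I)_{3+d}$, which is zero unless $3+d\in\{0,1\}$. Hence $\phi=0$ whenever $d\leq -4$, and the only remaining case is $d=-3$.

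\emph{The case $d=-3$.} Suppose $\{i,j\}\cap\{k,\ell\}=\{\lambda\}$. From \eqref{eq:r} we get $x_\lambda\cdot r(e_{ij}\wedge e_{k\ell})=-f_{k\ell}e_{ij}+f_{ij}e_{k\ell}$, a Koszul relation and thus an element of $R_0$; so $x_\lambda$ annihilates the class $\overline{r(e_{ij}\wedge e_{k\ell})}$ in $R/R_0$. If $\phi$ has degree $-3$, then $\phi\bigl(\overline{r(e_{ij}\wedge e_{k\ell})}\bigr)\in(S/I)_0=\KK\cdot\bar 1$, say it equals $c\,\bar 1$ with $c\in\KK$. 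Applying $\phi$ to the relation above yields $0=x_\lambda\cdot c\,\bar 1=c\,\bar x_\lambda$, and since $\bar x_\lambda\neq 0$ in $S/I$ we conclude $c=0$. Thus $\phi$ kills every degree-$3$ generator of $R/R_0$, so $\phi=0$, completing the argument.

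\emph{Where the difficulty lies.} Almost everything here is formal. The two points that need a line of verification are that no non-Koszul syzygies of degree below $3$ occur --- immediate from the explicit differential \eqref{eq:r} --- and that the degenerate index patterns involving the basis vectors $e_{ii}$ (coming from $x_i^2$) are covered by exactly the same computation, with $\lambda$ taken to be the repeated index. I do not anticipate a genuine obstacle.
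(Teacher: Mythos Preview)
Your argument is correct. The reduction to $d=-3$ is identical to the paper's, but for the borderline case you and the paper use different relations in $R/R_0$. The paper invokes the three-term linear syzygy
\[
x_\ell\cdot r(e_{ij}\wedge e_{ik})-x_k\cdot r(e_{ij}\wedge e_{i\ell})+x_j\cdot r(e_{ik}\wedge e_{i\ell})=0
\]
(for $j,k,\ell$ distinct) and then appeals to the linear independence of $\bar x_j,\bar x_k,\bar x_\ell$ in $S/I$ to force all the constants to vanish simultaneously. You instead observe that each degree-$3$ generator is individually killed in $R/R_0$ by its shared variable $x_\lambda$, since multiplying recovers the Koszul relation; nonvanishing of a single $\bar x_\lambda$ then suffices. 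Your route is more economical: it treats one generator at a time, avoids choosing a third distinct index, and needs only that $\bar x_\lambda\neq 0$ rather than linear independence of several classes. The paper's syzygy, on the other hand, is the same one that drives the higher-order computations later on, so it is natural in context even if slightly heavier here.
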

\begin{proof}
	The module $R$ has homogeneous generators of degrees $4$ (the Koszul relations) and degree $3$ ($r(e_{ij}\wedge e_{ik})$ for $j\neq k$). Hence, $R/R_0$ has generators of degree $3$, and thus any homogeneous element $\phi$ of $\Hom_S(R/R_0,S/I)$ has degree at least $-3$. However, for $j,k,\ell$ distinct there is a linear syzygy
\[	x_\ell\cdot r(e_{ij}\wedge e_{ik})
-x_k\cdot r(e_{ij}\wedge e_{i\ell})
+x_j\cdot r(e_{ik}\wedge e_{i\ell}).
\]
Since the residue classes of $x_1,\ldots,x_n$ are linearly independent in $S/I$, this means that if $\phi$ has degree $-3$, $\phi$ must be zero.
\end{proof}

\begin{remark}\label{rem:obstruction}
	The second cotangent cohomology module $T^2_{S/I}$ of $S/I$ arises as a quotient of $\Hom_S(R/R_0,S/I)$, cf.~\cite[Proposition 2.3]{stevens}. Lemma \ref{lemma:obstruction} thus implies that $T^2_{S/I}$ is concentrated in degrees at least $-2$. In fact, one can show that it is concentrated in degree $-2$, but we shall not need this here.
\end{remark}

\section{Classical approach}\label{sec:classical}
In this section, we will follow the classical approach of lifting syzygies to compute the local Hilbert scheme at $[I]\in \Hilb_{n+1}^n$, see e.g.~\cite[\S2]{stevens}.

\subsection{First order deformations and lifted relations}
In the following, we will write
$S\pow{\al_{ij}^{k}}$ for \[S\pow{\al_{ij}^{k}\ |\ 1\leq i,j,k\leq n,\ i\leq j}\] and set \[\mfm=\langle \al_{ij}^{k}\ |\  1\leq i,j,k\leq n,\ i\leq j\rangle\subset S\pow{\al_{ij}^{k}}.\]
As in the introduction, we set $\al_{ij}^{k}=\al_{ji}^{k}$ whenever $i>j$.
Let $f^{(0)}:S\pow{\al_{ij}^{k}}^p\to S\pow{\al_{ij}^{k}}$ and $r^{(0)}:S\pow{\al_{ij}^k}^q\to S\pow{\al_{ij}^k}^p$ be the natural extensions of $f,r$. 
We may use the $\theta_k^{ij}$ to construct a perturbation $f^{(0)}+f^{(1)}$ of $f$.

Define $f^{(1)}: S\pow{\al_{ij}^k}^p\to S\pow{\al_{ij}^k}$ via
\begin{equation}\label{eqn:t1}
	f^{(1)}(e_{\ell m}):=\sum_{\substack{i\leq j\\\lambda}} \al_{ij}^\lambda \theta^{ij}_\lambda(x_\ell x_m)
	=\sum_\lambda t_{\ell m}^{\lambda} x_\lambda
	\in S\pow{\al_{ij}^k}
\end{equation}
for $1\leq \ell\leq m\leq n$.
Likewise, define
$r^{(1)}: S\pow{\al_{ij}^k}^q\to S\pow{\al_{ij}^k}^p$ via
\begin{equation}\label{eqn:rel1}
	r^{(1)}(e_{ij}\wedge e_{k\ell})=-f^{(1)}(e_{k\ell}) e_{ij}+f^{(1)}(e_{ij})e_{k\ell}
\end{equation}
when $\{i,j\}$ and $\{k,\ell\}$ are disjoint, and 
\begin{equation}\label{eqn:rel2}
r^{(1)}(e_{ij}\wedge e_{ik})=
\sum_\lambda \left(\al_{ij}^\lambda e_{k\lambda}
-\al_{ik}^\lambda e_{j\lambda}\right)
\end{equation}
for $j\neq k$.

\begin{lemma}\label{lemma:relsat}
This perturbation satisfies
\[
	(f^{(0)}+f^{(1)})\cdot (r^{(0)}+r^{(1)}) \equiv 0 \mod \mfm^2.
\]
\end{lemma}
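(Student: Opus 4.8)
The plan is to expand the product, throw away the terms that are manifestly $0$ or manifestly in $\mfm^2$, and verify the single surviving identity by a short computation on the standard basis of $S\pow{\al_{ij}^k}^q$. Writing
\[
(f^{(0)}+f^{(1)})\cdot(r^{(0)}+r^{(1)}) = f^{(0)}r^{(0)} + \big(f^{(0)}r^{(1)}+f^{(1)}r^{(0)}\big) + f^{(1)}r^{(1)},
\]
we have $f^{(0)}r^{(0)}=0$ identically because \eqref{eq:Taylor_resolution} is a complex, and $f^{(1)}r^{(1)}\in\mfm^2$ because every matrix entry of $f^{(1)}$ and of $r^{(1)}$ lies in $\mfm$ (they are homogeneous of degree $1$ in the variables $\al_{ij}^k$). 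So modulo $\mfm^2$ the product equals $f^{(0)}r^{(1)}+f^{(1)}r^{(0)}$, and it is enough to show this $S\pow{\al_{ij}^k}$-linear map is identically zero, which I would do by evaluating it on each basis vector $e_{ab}\wedge e_{cd}$.

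The only facts needed are $f^{(0)}(e_{ab})=x_ax_b$, which is the definition, and $f^{(1)}(e_{\ell m})=\sum_\lambda\al_{\ell m}^\lambda x_\lambda$, which follows from \eqref{eqn:t1} and the definition of $\theta^{ij}_k$ once one notes that only the summand with $\{i,j\}=\{\ell,m\}$ contributes. The verification then splits according to \eqref{eq:r}. If $\{a,b\}\cap\{c,d\}=\emptyset$, then $r^{(0)}(e_{ab}\wedge e_{cd})=-x_cx_d\,e_{ab}+x_ax_b\,e_{cd}$ and $r^{(1)}$ is given by \eqref{eqn:rel1}, so $f^{(1)}r^{(0)}(e_{ab}\wedge e_{cd})=-x_cx_d\,f^{(1)}(e_{ab})+x_ax_b\,f^{(1)}(e_{cd})$ and $f^{(0)}r^{(1)}(e_{ab}\wedge e_{cd})=f^{(1)}(e_{ab})\,x_cx_d-f^{(1)}(e_{cd})\,x_ax_b$, and these two cancel. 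If instead $\{a,b\}\cap\{c,d\}\neq\emptyset$, then up to relabelling the basis vector is $e_{ij}\wedge e_{ik}$ with $j\neq k$; here \eqref{eq:r} gives $r^{(0)}(e_{ij}\wedge e_{ik})=-x_k\,e_{ij}+x_j\,e_{ik}$, and combining this with \eqref{eqn:rel2} one gets
\[
f^{(1)}r^{(0)}(e_{ij}\wedge e_{ik})=\sum_\lambda\big(-\al_{ij}^\lambda x_kx_\lambda+\al_{ik}^\lambda x_jx_\lambda\big)=-\,f^{(0)}r^{(1)}(e_{ij}\wedge e_{ik}),
\]
so again the sum is zero.

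The computation is entirely routine. The only step that requires a moment's care is the intersecting case, where one has to see that the division by the common variable built into $r$ in \eqref{eq:r} is exactly compensated by the particular shape chosen for $r^{(1)}$ in \eqref{eqn:rel2} --- which is, of course, precisely why \eqref{eqn:rel2} was written that way. I therefore do not expect any genuine obstacle beyond keeping track of signs and indices.
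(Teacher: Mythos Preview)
Your proof is correct and follows essentially the same approach as the paper: both reduce the claim to $f^{(0)}r^{(1)}+f^{(1)}r^{(0)}=0$ and verify this on the basis vectors $e_{ab}\wedge e_{cd}$ by splitting into the disjoint case (Koszul-type cancellation via \eqref{eqn:rel1}) and the intersecting case $e_{ij}\wedge e_{ik}$ (direct computation via \eqref{eqn:rel2}). Your write-up is slightly more explicit than the paper's in justifying why the other two terms drop out, but the argument is the same.
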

\begin{proof}
This is equivalent to showing $f^{(0)}r^{(1)}+f^{(1)}r^{(0)}=0$.
For $e_{ij}\wedge e_{k\ell}$ as in \eqref{eqn:rel1},
we have that 	$(f^{(0)}r^{(1)}+f^{(1)}r^{(0)})(e_{ij}\wedge e_{k\ell})$ is equal to 
\[f^{(0)}(-f^{(1)}(e_{k\ell}) e_{ij}+f^{(1)}(e_{ij})e_{k\ell})+f^{(1)}(-f^{(0)}(e_{k\ell}) e_{ij}+f^{(0)}(e_{ij})e_{k\ell})=0.\]

For $e_{ij}\wedge e_{ik}$ as in \eqref{eqn:rel2}, we have that 
$(f^{(0)}r^{(1)}+f^{(1)}r^{(0)})(e_{ij}\wedge e_{ik})$ is equal to
\begin{align*}
	f^{(0)}\left(\sum_\lambda \al_{ij}^\lambda e_{k\lambda}
	-\al_{ik}^\lambda e_{j\lambda}\right)+f^{(1)}(-x_ke_{ij}+x_je_{ik})\\
=	\left(\sum_\lambda \al_{ij}^\lambda x_kx_\lambda
-\al_{ik}^\lambda x_jx_\lambda\right)-x_k\left(\sum_\lambda \al_{ij}^\lambda  x_\lambda\right)+x_j\left(\sum_\lambda \al_{ik}^\lambda x_\lambda\right)
\end{align*}
which clearly vanishes.
\end{proof}
\begin{remark}\label{rem:koszul}
	For any perturbation $\tilde f$ of $f$, relations coming from $e_{ij}\wedge e_{k\ell}$ with $\{i,j\}, \{k,\ell\}$ disjoint are Koszul relations and may be lifted trivially similarly to \eqref{eqn:rel1}. In the following, we will disregard these relations and only consider the non-Koszul relations coming from $e_{ij}\wedge e_{ik}$.
\end{remark}

\subsection{Second order}
To compute the local Hilbert scheme, we need to lift the above perturbations of $f$ and $r$ to higher order. Since the ideal $I$ is $\ZZ$-graded, everything we are considering (e.g.~$\Hom_S(I,S/I)$, $S\pow{\al_{ij}^k}/\mfm^\lambda$) inherits a $\ZZ$-grading, and we may insist that perturbations of $f$ and $r$ remain homogeneous. The degrees of the $\theta^{ij}_k$ are all $-1$, and the corresponding deformation parameters $\al_{ij}^k$ have degree $1$.

To preserve homogeneity, since the generators of $I$ are quadratic, any homogeneous perturbation of $f$ will have degrees at most $2$ in the $\al_{ij}^k$. Hence, a homogeneous lift of $f^{(0)}+f^{(1)}$ to arbitrary order must have the form
$f^{(0)}+f^{(1)}+f^{(2)}$ where
$f^{(2)}: S\pow{\al_{ij}^k}^p\to S\pow{\al_{ij}^k}$ sends $e_{\ell m}$ to $c_{\ell m}\in \KK[\al_{ij}^k]$, a quadratic form. Since $e_{\ell m}=e_{m\ell}$ we also have $c_{\ell m}=c_{m\ell}$.

\begin{lemma}\label{lemma:order2}
	There exist quadratic forms $c_{\ell m}$ in $\KK\pow{\al_{ij}^k}$ such that  the syzygies of $f^{(0)}+f^{(1)}$ modulo $\mfm^2$ lift to syzygies of $f^{(0)}+f^{(1)}+f^{(2)}$ modulo $\mfm^3$ if and only if we impose the additional conditions
	obtained by setting elements of $\mfJ$ equal to $0$.
Moreover, modulo these constraints, the $c_{\ell m}$ are uniquely determined and we have
\[
	c_{\ell m}=\gamma_{\ell m \lambda}^{\lambda}
\]
for any $\lambda\neq m$.
\end{lemma}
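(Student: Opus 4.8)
The strategy is to carry out the second-order syzygy lift explicitly. By Remark \ref{rem:koszul} we only need to lift the non-Koszul relations $r(e_{ij}\wedge e_{ik})$ for $j\neq k$. We seek $f^{(2)}(e_{\ell m})=c_{\ell m}\in\KK[\al_{ij}^k]$ (homogeneous of degree $2$ in the parameters) and a perturbation $r^{(2)}$ so that
\[
	(f^{(0)}+f^{(1)}+f^{(2)})(r^{(0)}+r^{(1)}+r^{(2)})\equiv 0\mod\mfm^3.
\]
Expanding and using Lemma \ref{lemma:relsat}, the order-$\mfm^2$ part of this equation reads
\[
	f^{(0)}r^{(2)}+f^{(1)}r^{(1)}+f^{(2)}r^{(0)}\equiv 0\mod\mfm^3
\]
on each $e_{ij}\wedge e_{ik}$. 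The plan is: first compute the "obstruction cocycle" $b(e_{ij}\wedge e_{ik}):=f^{(1)}r^{(1)}(e_{ij}\wedge e_{ik})\in S\pow{\al}^{\,}$, a homogeneous element of degree $3$ in $x$ and degree $2$ in $\al$; then determine when it can be written as $-f^{(0)}r^{(2)}-f^{(2)}r^{(0)}$, i.e.\ when its class in (the relevant piece of) $\Hom_S(R/R_0,S/I)\otimes\mfm^2/\mfm^3$ vanishes.

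Concretely, from \eqref{eqn:rel2} and \eqref{eqn:t1},
\[
	f^{(1)}r^{(1)}(e_{ij}\wedge e_{ik})=\sum_\lambda\bigl(\al_{ij}^\lambda f^{(1)}(e_{k\lambda})-\al_{ik}^\lambda f^{(1)}(e_{j\lambda})\bigr)
	=\sum_{\lambda,\mu}\bigl(\al_{ij}^\lambda t_{k\lambda}^\mu-\al_{ik}^\lambda t_{j\lambda}^\mu\bigr)x_\mu.
\]
Using $t_{k\lambda}^\mu=\al_{k\lambda}^\mu$ this is $\sum_\mu\bigl(\sum_\lambda \al_{ij}^\lambda\al_{k\lambda}^\mu-\al_{ik}^\lambda\al_{j\lambda}^\mu\bigr)x_\mu=\sum_\mu\gamma_{ijk}^\mu x_\mu$, which explains the appearance of the $\gamma$'s. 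We must now absorb this into the allowed ambiguity. The term $f^{(2)}r^{(0)}(e_{ij}\wedge e_{ik})=-x_k f^{(2)}(e_{ij})+x_j f^{(2)}(e_{ik})=-x_k c_{ij}+x_j c_{ik}$ lies in $I\pow{\al}$ (it is a multiple of the $x$'s of $x$-degree $1$, times scalars... more precisely, modulo $I$ it contributes $-c_{ij}x_k+c_{ik}x_j$ after identifying $x_\ell x_m\equiv 0$ only when degree $\geq 2$—but here $x_j,x_k$ are degree-one, so this term is genuinely present modulo $I$). The term $f^{(0)}r^{(2)}(e_{ij}\wedge e_{ik})$ lies in the ideal generated by the $x_\ell x_m$, hence is $\equiv 0\bmod I$. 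Therefore, working modulo $I$, the liftability condition is
\[
	\sum_\mu\gamma_{ijk}^\mu x_\mu - c_{ij}x_k + c_{ik}x_j \;\equiv\; 0 \pmod{I},
\]
i.e.\ this element of the $\KK$-span of $x_1,\dots,x_n$ (tensored with $\mfm^2/\mfm^3$) must vanish. Comparing coefficients of each $x_\mu$: for $\mu\notin\{j,k\}$ we get $\gamma_{ijk}^\mu=0$; the coefficient of $x_k$ gives $\gamma_{ijk}^k=c_{ij}$; the coefficient of $x_j$ gives $\gamma_{ijk}^j=-c_{ik}$, i.e.\ $c_{ik}=-\gamma_{ijk}^j=\gamma_{ikj}^j$ by \eqref{eqn:gamma1}. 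So $c_{ij}=\gamma_{ijk}^k$ is forced for every admissible $k\neq i,j$ (taking the relation $e_{ij}\wedge e_{ik}$), and one checks the two determinations of $c_{ij}$ coming from different choices of the third index agree precisely modulo the generators $\gamma_{ijk}^k-\gamma_{ij\ell}^\ell$ of $\mfJ$—this uses Lemma \ref{lemma:gens} to also cover the index patterns where the "outer" index varies. The vanishing conditions $\gamma_{ijk}^\mu=0$ for $\mu$ distinct from $j,k$ (and, after accounting for symmetry in $i,j$, also from $i$) are exactly the first family of generators of $\mfJ$; the compatibility of the various $c_{\ell m}$ gives the second family.

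Two technical points need care. First, one must handle the case $i=j$ (so the relation is $e_{ii}\wedge e_{ik}$) and the case $j=k$ is excluded by hypothesis; also the "missing" index $i$ must be discussed, since $e_{ij}\wedge e_{ik}$ only sees a third index $i$ shared by both, so to force $\gamma_{ijk}^i=0$ one uses the symmetry $\al_{ij}^\lambda=\al_{ji}^\lambda$ together with relation $e_{ji}\wedge e_{jk}$, i.e.\ Lemma \ref{lemma:gens}. Second, one must confirm that once the generators of $\mfJ$ are set to zero the obstruction genuinely vanishes (not merely its class): that is, exhibit $r^{(2)}$ explicitly. Here $f^{(0)}r^{(2)}(e_{ij}\wedge e_{ik})$ must equal $-\sum_\mu\gamma_{ijk}^\mu x_\mu + c_{ij}x_k - c_{ik}x_j$ as an element of $S\pow{\al}$ (not just mod $I$), which is a homogeneous element of $I$ of $x$-degree... wait, it has $x$-degree $1$, so this is impossible unless the right-hand side is $0$ in $S\pow{\al}$. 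The resolution, as in Lemma \ref{lemma:order2}'s proof, is that modulo $\mfJ$ the quantity $\sum_\mu\gamma_{ijk}^\mu x_\mu$ reduces to $\gamma_{ijk}^k x_k+\gamma_{ijk}^j x_j = c_{ij}x_k - c_{ik}x_j$, so the whole expression is genuinely $0$ and one may take $r^{(2)}=0$ on these generators. I expect the main obstacle to be bookkeeping: carefully tracking which index-patterns of $\gamma$ are killed by which relation $e_{ij}\wedge e_{ik}$, correctly invoking \eqref{eqn:gamma1}, \eqref{eqn:gamma2}, and Lemma \ref{lemma:gens} to see that the resulting ideal is exactly $\mfJ$ and not something smaller or larger, and verifying the uniqueness claim $c_{\ell m}=\gamma_{\ell m\lambda}^\lambda$ is independent of $\lambda\neq m$ modulo $\mfJ$.
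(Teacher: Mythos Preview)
Your proposal is correct and follows essentially the same approach as the paper: compute $f^{(1)}r^{(1)}(e_{ij}\wedge e_{ik})=\sum_\mu\gamma_{ijk}^\mu x_\mu$, set it equal to $x_kc_{ij}-x_jc_{ik}$, and compare coefficients of $x_\mu$ to extract exactly the generators of $\mfJ$ together with the forced values of $c_{\ell m}$. The only stylistic difference is that the paper disposes of $r^{(2)}$ up front by a homogeneity argument (the non-Koszul relation has internal degree $3$, the $e_{\ell m}$ have degree $2$, and the $\al$'s have degree $1$, so a term quadratic in $\al$ cannot appear in $\hat r(e_{ij}\wedge e_{ik})$), whereas you first allow $r^{(2)}$, observe $f^{(0)}r^{(2)}\in I$ while the remaining terms have $x$-degree $1$, and then conclude $r^{(2)}=0$; both routes are valid and yield the same equation $f^{(1)}r^{(1)}+f^{(2)}r^{(0)}=0$.
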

\begin{proof}
As noted in Remark \ref{rem:koszul}, we may concentrate on lifting the relations coming from $e_{ij}\wedge e_{ik}$ with $j\neq k$. For degree reasons, it is not possible to homogeneously perturb $(r^{(0)}+r^{(1)})(e_{ij}\wedge e_{ik})$ to higher order. Thus, the condition that the syzygies of $f$ lift to the syzygies of $f^{(0)}+f^{(1)}+f^{(2)}$ modulo $\mfm^3$ becomes
\[
	(f^{(1)}r^{(1)}+f^{(2)}r^{(0)})(e_{ij}\wedge e_{ik})=0
\]
for all $j\neq k$. Expanding this out gives the condition
\[
	\sum_{\lambda,\ell}\left( \al_{ij}^\lambda \al_{k\lambda}^{\ell}x_{\ell}
	-\al_{ik}^\lambda \al_{j\lambda}^{\ell}x_{\ell}\right)+(-x_kc_{ij}+x_jc_{ik})=0
\]
which we may rewrite as 
\[
	\sum_{\ell} \gamma_{ijk}^{\ell}x_\ell=x_kc_{ij}-x_jc_{ik}.
\]

We now compare coefficients in front of the variables $x_\ell$. For $\ell\neq j,k$, we obtain 
$\gamma_{ijk}^{\ell}=0$. For $\ell=j$ or $\ell=k$ we respectively obtain
\[
	c_{ij}=\gamma_{ijk}^{k}\qquad c_{ik}=-\gamma_{ijk}^{j}=\gamma_{ikj}^{j}
\]
with the last equality coming from \eqref{eqn:gamma1}. This shows that we may lift syzygies if and only if 
$\gamma_{ijk}^{\ell}=0$ whenever $j,k,\ell$ are distinct, and for all $i,j$ and $k\neq j$,
$\gamma_{ijk}^{k}$ is independent of $k$, and agrees with $\gamma_{ji\ell}^{\ell}$ whenever $i\neq \ell$. These are exactly the conditions imposed by $\mfJ$, see Lemma \ref{lemma:gens}. It follows that $c_{\ell m}$ also must have the desired form.
\end{proof}
\begin{remark}\label{rem:cij}
It follows immediately from the description of $\mfJ$ that for any $\lambda\neq m$,
\[
	\gamma_{ij\lambda}^\lambda\equiv \sum_{k=1}^n\frac{\gamma_{ijk}^k}{n-1} \mod \mfJ.
\]
\end{remark}
\subsection{Higher order and proof of Theorem \ref{thm:main}}\label{sec:proof}
We may keep trying to lift $f$ to a higher order perturbation $\hat{f}=f^{(0)}+f^{(1)}+f^{(2)}+\ldots$ subject to the condition that the relations $r$ lift to $\hat{r}=r^{(0)}+r^{(1)}+\ldots$ satisfying $\hat{f}\hat{r}\equiv 0$ modulo $\mfm^k+\mfJ$ for $k=2,3,\ldots$. For $k=2$ we have already done this in Lemma \ref{lemma:order2}. For larger $k$, the obstruction to such a lifting lies in 
\begin{equation}\label{eqn:ospace}
	T^2_{S/I}\otimes (\mfm^{k-1}+\mfJ)/(\mfm^k+\mfJ),
\end{equation}
 see e.g.~\cite[Proposition 2.3]{stevens}. 

 We may always choose such liftings to respect the $\ZZ$-grading, in which case the obstruction to lifting modulo $\mfm^k+\mfJ$ lies in the degree $0$ part of \eqref{eqn:ospace}. But since the generators of $\mfm$ all have degree $1$ and $(T^2_{S/I})_u=0$ for $u<-2$ (Remark \ref{rem:obstruction}), the degree $0$ part of \eqref{eqn:ospace} vanishes for $k>2$. This implies that only obstructions to lifting are already in $\mfJ$.
 This shows that the completion of the local ring of $\Hilb_{n+1}^n$ at $[I]$ is indeed $\KK\pow{\al_{ij}^k}/\mfJ$.

 Assuming that we have chosen our liftings $\hat{f},\hat{r}$ of $f^{(0)}+f^{(1)}+f^{(2)}$ and $r^{(0)}+r^{(1)}$ to respect the $\ZZ$-grading, we must have 
 \[
	 \hat{f}=f^{(0)}+f^{(1)}+f^{(2)}
 \]
 since the generators $e_{ij}$ of $S^p$ all have degree $2$. The term $f^{(2)}$ must be as in Lemma \ref{lemma:order2}, and by Remark \ref{rem:cij} this implies that the universal family is as claimed in Theorem \ref{thm:main}.

 Finally, the same statements about the miniversal deformation of $\Spec S/I$ follow from the same arguments as above where we treat $\al_{ij}^k=0$ as $0$ for $i=j=k$. Indeed, after setting these deformation parameters to $0$, the remaining $\al_{ij}^k$ give a dual basis for $T^1_{S/I}$, see Remark \ref{rem:t1}.
 This concludes the proof of Theorem \ref{thm:main}.

 We conclude this section with the observation of a non-trivial syzygy among the $\gamma_{ijk}^\ell$.
 \begin{lemma}\label{lemma:SYZ}
For any $1\leq i,j,k\leq n$ with $j\neq k$ we have
\[	 \sum_{\ell,\lambda} (\al_{ij}^\ell \gamma_{k\ell\lambda}^{\lambda}
	-\al_{ik}^\ell \gamma_{j\ell\lambda}^{\lambda})\in \mfJ.
\]
\end{lemma}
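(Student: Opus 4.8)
The plan is to read this identity off the commutative ring structure of the universal family, which is already available to us via Theorem~\ref{thm:main}. Write
\[
c_{ij} := \frac{1}{n-1}\sum_{\lambda=1}^n \gamma_{ij\lambda}^{\lambda}\in\KK\pow{\al_{ij}^{k}},
\]
so that the generator of the universal family attached to a pair $i\le j$ is $x_ix_j+\sum_{\ell}\al_{ij}^{\ell}x_\ell+c_{ij}$, and recall $c_{ij}\equiv c_{ji}\pmod{\mfJ}$ (immediate from the description of $\mfJ$, cf.\ Lemma~\ref{lemma:gens}). By the very definition of $c$ we have $\sum_{\lambda}\gamma_{k\ell\lambda}^{\lambda}=(n-1)c_{k\ell}$, so the element displayed in the statement is equal to $(n-1)\sum_{\ell}\bigl(\al_{ij}^{\ell}c_{k\ell}-\al_{ik}^{\ell}c_{j\ell}\bigr)$; since $(n-1)\mfJ\subseteq\mfJ$, it suffices to prove that $\sum_{\ell}\bigl(\al_{ij}^{\ell}c_{k\ell}-\al_{ik}^{\ell}c_{j\ell}\bigr)\in\mfJ$.

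To do this I would work in $\bar R:=\KK\pow{\al_{ij}^{k}}/\mfJ$ and in the $\bar R$-algebra
\[
A:=\bar R[x_1,\dots,x_n]\big/\bigl(x_ix_j+{\textstyle\sum_{\ell}}\,\al_{ij}^{\ell}x_\ell+c_{ij}\,:\,1\le i\le j\le n\bigr).
\]
By Theorem~\ref{thm:main}, $A$ is the universal family of $\Hilb_{n+1}^{n}$ over $\bar R$; by flatness of that family, $1,x_1,\dots,x_n$ is an $\bar R$-\emph{basis} of $A$ (it certainly spans, and it has the right cardinality). On the other hand $A$, being a quotient of a polynomial ring, is a commutative associative ring, so the triple product $x_ix_jx_k$ does not depend on bracketing. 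Expanding $(x_ix_j)x_k$ by repeatedly substituting $x_ax_b=-\sum_m\al_{ab}^{m}x_m-c_{ab}$ gives
\[
(x_ix_j)x_k=\sum_m\Bigl(\sum_{\ell}\al_{ij}^{\ell}\al_{k\ell}^{m}\Bigr)x_m\;-\;c_{ij}x_k\;+\;\Bigl(\sum_{\ell}\al_{ij}^{\ell}c_{k\ell}\Bigr)\cdot 1,
\]
and similarly $(x_ix_k)x_j=\sum_m(\sum_{\ell}\al_{ik}^{\ell}\al_{j\ell}^{m})x_m-c_{ik}x_j+(\sum_{\ell}\al_{ik}^{\ell}c_{j\ell})\cdot 1$. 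These two elements of $A$ agree, and comparing coefficients of the basis vector $1$ (the ``cross terms'' $c_{ij}x_k$ and $c_{ik}x_j$ are multiples of $x_k$ and $x_j$, hence do not contribute; when $j=k$ the claimed syzygy is $0$ anyway) yields $\sum_{\ell}\al_{ij}^{\ell}c_{k\ell}=\sum_{\ell}\al_{ik}^{\ell}c_{j\ell}$ in $\bar R$, which is exactly what we needed.

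The individual manipulations here are just routine substitutions, so the only genuinely load-bearing point is the one invisible in the calculation: that $1,x_1,\dots,x_n$ really is an $\bar R$-basis of $A$ rather than merely a generating set — this is precisely the flatness of the universal family furnished by Theorem~\ref{thm:main}, and without it the coefficient comparison would be meaningless. (A direct proof rewriting the left-hand side through \eqref{eqn:gamma1} and \eqref{eqn:gamma2} looks feasible but involves considerably more bookkeeping, which is why I would prefer the route above.)
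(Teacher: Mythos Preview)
Your proof is correct and is essentially the paper's argument rephrased in algebraic rather than syzygy language. The paper observes that, for degree reasons, the homogeneous liftings satisfy $\hat f=f^{(0)}+f^{(1)}+f^{(2)}$ and $\hat r(e_{ij}\wedge e_{ik})=(r^{(0)}+r^{(1)})(e_{ij}\wedge e_{ik})$, so $\hat f\hat r\equiv 0\pmod{\mfJ}$ forces $f^{(2)}r^{(1)}(e_{ij}\wedge e_{ik})=\sum_\ell(\al_{ij}^{\ell}c_{k\ell}-\al_{ik}^{\ell}c_{j\ell})\in\mfJ$; this is exactly your coefficient-of-$1$ extraction from $(x_ix_j)x_k=(x_ix_k)x_j$ under the dictionary ``lifted syzygy satisfied $=$ deformed multiplication associative'', with flatness of the universal family (your load-bearing point) being the algebraic reformulation of $\hat f\hat r\equiv 0\pmod{\mfJ}$. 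Your packaging aligns more closely with the based-algebra viewpoint of \S\ref{sec:algebra}---indeed, your computation is essentially how the case $\ell=0$ of $\widetilde\gamma_{ijk}^\ell$ arises there---but since both arguments rest on Theorem~\ref{thm:main}, neither furnishes the ``purely elementary'' proof the paper asks for after Theorem~\ref{thm:Bn}.
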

\begin{proof}
	By the above, there exist homogeneous liftings $\hat{f},\hat{r}$ of $f,r$ satisfying $\hat{f}\hat{r}\equiv 0$ modulo $\mfJ$. We have already seen that $\hat{f}=f^{(0)}+f^{(1)}+f^{(2)}$. For similar degree reasons, for any $i,j,k$ with $j\neq k$ we must have 
	\[
		\hat{r}(e_{ij}\wedge e_{ik})=(r^{(0)}+r^{(1)})(e_{ij}\wedge e_{ik}).
	\]
Indeed, the degree of $r(e_{ij}\wedge e_{ik})$ is $3$ and $S^p$ has generators of degree $2$, so we cannot have any terms involving powers of $\al_{\ell m}^\lambda$ higher than $1$. 

The equality $\hat{f}\hat{r}\equiv 0$ modulo $\mfJ$ thus implies that 
\[
	f^{(2)}r^{(1)}(e_{ij}\wedge e_{ik})\in \mfJ.
\]
But we compute that 
\begin{align*}
	(n-1)\cdot f^{(2)}r^{(1)}(e_{ij}\wedge e_{ik})
	&=(n-1)\cdot \sum_\ell \left(\al_{ij}^\ell f^{(2)}(e_{k\ell})-\al_{ik}^\ell f^{(2)}(e_{j\ell})\right)\\
	&=\sum_{\ell,\lambda}\left(\al_{ij}^\ell \gamma_{k\ell\lambda}^\lambda-\al_{ik}^\ell\gamma_{j\ell\lambda}^\lambda\right)
\end{align*}
with the second equality following from Lemma \ref{lemma:order2} and Remark \ref{rem:cij}. The claim of the lemma follows.
\end{proof}

\begin{remark}
	If we had a direct proof of Lemma \ref{lemma:SYZ}, we would not have to appeal to the argument using degrees of the $\al_{ij}^k$ and of the elements of $T^2_{S/I}$ to conclude that $\mfJ$ contains all obstructions to deforming $\Spec S/I$. 
\end{remark}

\section{DG-Lie methods and abstract deformations}\label{sec:dgla}
We follow the general framework of~\cite{MaMe19} to view deformations of $\Spec S/I$ in terms of those controlled by a differential graded Lie algebra.
\subsection{The derivation Lie algebra}\label{sec:der}
Let $P\to S/I$ be a Koszul-Tate resolution of $S/I$, that is, a semifree\footnote{This means that $P$ is free as a graded commutative algebra after forgetting the differential.} commutative differential graded algebra (CDGA) over $\KK$ with $P^i=0$ for $i>0$, $H^0(P)\cong S/I$ and all other cohomology groups vanishing, see e.g.~\cite{tate}. 
We denote the differential of $P$ by $\delta_P$.\footnote{Note that \cite{tate} uses homological degrees instead of cohomological ones as we do.}
Each $P^i$ splits as a direct sum $P^i=P^i_\old \oplus P^i_\new$, where $P^i_\old$ is generated by all products of elements of higher cohomological degree. For $i<0$, we call generators for $P^i_\new$ as a $P^0$-module \emph{free variables}. In our setting, we can (and will) take $P$ so that $P^0=S$.

The functor of infinitesimal deformations of $P$ in the category of non-positively\footnote{Recall that the restriction to the category of \emph{non-positively graded} CDGAs is crucial, cf.~\cite[Section~7]{MaMe19}.} graded CDGAs is isomorphic to the functor of infinitesimal deformations of $\Spec S/I$ and since $P$ is semifree, these deformations of $P$ all come from perturbing the differential $\delta_P$, see~\cite[Section~7]{MaMe19}.

For $m\in\ZZ$, let $\Hom_\KK^m(P,P)$ consist of those $\KK$-linear maps from $P$ to itself satisfying $\phi(P^i)\subseteq P^{i+m}$ for all $i\in \ZZ$. We take $\Hom^\bullet_\KK(P,P)\subset \Hom_\KK(P,P)$ to be the direct sum of the $\Hom_\KK^m(P,P)$.
\begin{definition}\cite[Example 6.5.6]{manetti}
The derivation Lie algebra of $P$ is 
\begin{align*}
\Der_\KK^\bullet(P,P)&=\bigoplus_m \Der_\KK^m(P,P)\subset \Hom^\bullet_\KK(P,P)
\end{align*}
where $\Der_\KK^m(P,P)$ consists of those $\phi\in\Hom_\KK^m(P,P)$ satisfying the graded Leibniz rule
\[\phi(ab)=\phi(a)b+(-1)^{\ell m}a\phi(b)\ \qquad\forall a\in P_\ell,b\in P.\]
The Lie bracket on $\Der_\KK^\bullet(P,P)$ is  given by
\[
	[\phi_1,\phi_2]=\phi_1\phi_2-(-1)^{m_1m_2}\phi_2\phi_1
\]
for homogeneous derivations of degrees $m_1,m_2$, and the differential is 
\begin{align*}
\partial:\Der_\KK^\bullet(P,P)&\to \Der_\KK^\bullet(P,P)\\
	\phi&\mapsto[\partial_P,\phi].
\end{align*}
\end{definition}

The functor of infinitesimal deformations of $P$ (and thus those of $\Spec S/I$) is isomorphic to the deformation functor defined by the DG Lie algebra $\Der_\KK^\bullet(P,P)$ via Maurer-Cartan solutions modulo gauge equivalence, see~\cite[Corollary~7.12]{MaMe19}. Since the projection $P\to S/I$ is a quasiisomorphism, it induces isomorphisms
\begin{equation}\label{eqn:qiso}
H^i(\Der_\KK^\bullet(P,P))\to H^i(\Der_\KK^\bullet(P,S/I)),
\end{equation}
see~\cite[Section~7]{Hin97}.
Here, $\Der_\KK^\bullet(P,S/I)$ is a subcomplex of $\Hom_\KK^\bullet(P,S/I)$ defined analogously to $\Der_\KK^\bullet(P,P)$ with differential $\overline \partial$ given by
\[
\phi\mapsto \phi\circ \partial_P.
\]
\begin{remark}\label{rmk.doublegrading}
	By definition, $P$ carries a cohomological grading. Since $S/I$ itself carries a $\ZZ$-grading, we may choose $P$ so that each $P^i$ is $\ZZ$-graded, the differential $\partial_P$ is of degree $0$ with respect to this grading, and the isomorphism $H^0(P)\cong S/I$ is an isomorphism of $\ZZ$-graded $S$-modules.  We call this grading coming from the grading of $S/I$ the \emph{internal} grading to distinguish it from the cohomological grading.

Since $P_\new$ is not finitely generated as an $S$-module, arbitrary elements of $\Der_\KK^i(P,P)$ do not necessarily decompose as a sum of (internally) homogeneous derivations. However, we may consider the subalgebra
\[\Der_\KK^{\bullet,\bullet}(P,P)=\bigoplus_m \Der_\KK^{\bullet,m}(P,P)\]
where $\Der_\KK^{\bullet,m}(P,P)$ consists of those $\phi\in\Der_\KK^\bullet(P,P)$ that are (internally) homogeneous of degree $m$.
It follows from \eqref{eqn:qiso} that the inclusion $\Der_\KK^{\bullet,\bullet}(P,P)\to \Der_\KK^{\bullet}(P,P)$ is also a quasiisomorphism, and so
the deformation functor controlled by $\Der_\KK^{\bullet,\bullet}(P,P)$ is isomorphic to that of $\Der_\KK^{\bullet}(P,P)$.
\end{remark}

\subsection{A truncated Koszul-Tate resolution}
For our purposes, we will only need the pieces $P^{-2}\to P^{-1}\to P^0=S$ of a Koszul-Tate resolution of $S/I$. Inductively building $P$ as in \cite[Proof of Theorem 1]{tate}, we may take the complex
\[
	\begin{tikzcd}[row sep=tiny]
		P^{-2} \arrow[dd,equal] & P^{-1} \arrow[dd,equal] & P^0 \arrow[dd,equal]\\
	\\
	\bigwedge^2 S^p  \arrow[r] & S^p \arrow [r,"f"] & S\\
\bigoplus \\
	S^q\arrow[uur,"r"]
\end{tikzcd}
\]
where the top row is the Koszul complex, and $r,f$ are as in \S\ref{sec:taylor}. The Koszul complex is naturally a CDGA; we add the $S^q$ factor to $P^{-2}$ by adjoining $q$ new free variables. In the notation of \S\ref{sec:der}, $P^{-2}_\old=\bigwedge^2 S^p$ and $P^{-2}_\new=S^q$.
As desired, we have $H^0(P)=S/I$ and $H^{-1}(P)=0$. By adjoining additional free variables in degrees less than $-2$, we can also obtain vanishing of the lower cohomology groups; we assume that we have done this in a way such that the resulting $P$ is (internally) $\ZZ$-graded.

\begin{warning}
	By construction, we have identified $S^q$ with $\bigwedge^2 S^p$, although the map $r$ is \emph{not} the usual Koszul differential. To differentiate between elements of the two summands $\bigwedge^2 S^p$ and $S^q$ of $P^{-2}$, we will use the notation $e_{ij}\curlywedge e_{k\ell}$ for the former, and $e_{ij}\wedge e_{k\ell}$ for the latter (as we did in \S\ref{sec:taylor}).  
\end{warning}

\subsection{First order deformations}\label{dgla:t1}
Directly from the definitions, we see 
\begin{align*}
\Der^0_\KK(P,S/I)&\cong \Der_\KK(S,S/I)\\
\Der^1_\KK(P,S/I)&\cong \Hom_S(S^p,S/I)\\
\Der^2_\KK(P,S/I)&\cong \Hom_S(S^q,S/I)
\end{align*}
and
$H^1(\Der_\KK^\bullet(P,S/I))$ is the quotient of
\[
	\{\overline \phi\in \Hom_S(S^p,S/I)\ |\ \overline \phi\circ r=0\}=\Hom_S(I,S/I)
\]
by $\Der_\KK(S,S/I)$. This is exactly $T^1_{S/I}$, see Remark \ref{rem:t1}.
In particular, from Proposition \ref{prop:theta} coupled with this remark, the classes of $\theta_k^{ij}$ for $1\leq i,j,k\leq n$ with not all $i,j,k$ equal give a basis for $H^1(\Der_\KK^\bullet(P,S/I))$.

Viewed as an element of $\Der^1_\KK(P,S/I)$
\[
	\theta^{ij}_{k}(e_{\ell m})= \begin{cases}
x_k + I &\quad \text{if } \{\ell,m\} = \{i,j\} \\
0 + I &\quad \text{if } \{\ell,m\} \neq \{i,j\},
\end{cases}
\]
$\theta^{ij}_{k}(e_{\ell m}\wedge e_{\ell' m'})=0$, and $\theta^{ij}_{k}(e_{\ell m}\curlywedge e_{\ell' m'})$ is determined by the graded Leibniz rule.

In the remainder of \S\ref{sec:dgla}, we will adopt the convention that $t_{ii}^{i}=0$ for any $1\leq i \leq n$.
Set \[\overline\phi=\sum_{i,j,k} \al_{ij}^k\theta_k^{ij}\in \Der^1_\KK(P,S/I)\otimes \KK\pow{\al_{ij}^k};\] we wish to lift this to an element $\phi$ in
$\Der^1_\KK(P,P)\otimes \KK\pow{\al_{ij}^k}$.

\begin{lemma}\label{lemma:relsat2}
	There is a closed element $\phi \in \Der^{1,-1}_\KK(P,P)\otimes \KK\pow{\al_{ij}^k}$ lifting $\overline \phi$ that satisfies
\begin{align}
	\phi(e_{\ell m})&=\sum_\lambda \al_{\ell m}^{\lambda}x_\lambda\qquad 1\leq \ell,m \leq n \label{phi:1}\\
	\phi(e_{ij}\wedge e_{ik}) &= \sum\limits_{\lambda}\left(\al_{ij}^\lambda e_{k\lambda} - \al_{ik}^\lambda e_{j\lambda}\right) \qquad 1\leq i,j,k\leq n\ j\neq k \label{phi:2}
\end{align}
\end{lemma}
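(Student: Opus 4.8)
The plan is to exhibit $\phi$ explicitly by specifying its values on all free variables of $P$ and then checking that the two required conditions hold: that $\phi$ lifts $\overline\phi$, and that $\phi$ is closed, i.e.\ $[\partial_P,\phi]=0$. Since $P$ is semifree, a derivation is uniquely determined by its values on the free generators, so the formulas \eqref{phi:1} and \eqref{phi:2} together with a choice of $\phi$ on the free variables in cohomological degrees $<-2$ determine $\phi$ completely; the Leibniz rule then extends it to $P_\old$. The internal homogeneity is automatic from the degrees: $\al_{\ell m}^\lambda$ has internal degree $1$, the $e_{\ell m}$ have internal degree $2$, and $x_\lambda$ has degree $1$, so \eqref{phi:1} is internally homogeneous of degree $-1$; similarly in \eqref{phi:2} the $e_{k\lambda}$ have internal degree $2$ and $\al_{ij}^\lambda$ degree $1$ against the internal degree $3$ of $e_{ij}\wedge e_{ik}$, again giving degree $-1$. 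So $\phi\in\Der^{1,-1}_\KK(P,P)\otimes\KK\pow{\al_{ij}^k}$.

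First I would verify that $\phi$ lifts $\overline\phi$. Composing with the projection $P\to S/I$, formula \eqref{phi:1} reduces to $\sum_\lambda \al_{\ell m}^\lambda x_\lambda + I = \sum_{i,j,k}\al_{ij}^k\theta_k^{ij}(e_{\ell m})$, which is exactly the definition of $\overline\phi$ on $e_{\ell m}$ (using the convention $\al_{ij}^k=\al_{ji}^k$ and $t^i_{ii}=0$). On the degree $-2$ free variables $e_{ij}\wedge e_{k\ell}$ and higher free variables, $\overline\phi$ vanishes (it lands in $\Der^1_\KK(P,S/I)$, which only sees $\Der^1$, so $\overline\phi$ is zero on $P^{-2}$ and below after projection); meanwhile $\phi(e_{ij}\wedge e_{k\ell})\in S^p\otimes\KK\pow{\al_{ij}^k}$ maps to zero in $S/I$ since $S^p$ surjects onto $I\subset S$ which is zero in $S/I$. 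Hence $\phi$ lifts $\overline\phi$.

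The main work is checking $[\partial_P,\phi]=0$, equivalently $\partial_P\phi = -\phi\partial_P$ as maps $P\to P$ (signs fixed by the degrees $|{\partial_P}|=1$, $|\phi|=1$). It suffices to check this on the free generators of $P$. On $e_{\ell m}\in P^{-1}$: $\partial_P\phi(e_{\ell m}) = \partial_P(\sum_\lambda\al_{\ell m}^\lambda x_\lambda)=0$ since $x_\lambda\in P^0=S$, while $\phi\partial_P(e_{\ell m})=\phi(x_\ell x_m)=0$ because $\phi$ vanishes on $P^0$; so both sides vanish. On $e_{ij}\wedge e_{ik}\in P^{-2}_\new$: one side is $\partial_P\phi(e_{ij}\wedge e_{ik})=f\big(\sum_\lambda(\al_{ij}^\lambda e_{k\lambda}-\al_{ik}^\lambda e_{j\lambda})\big)=\sum_\lambda(\al_{ij}^\lambda x_kx_\lambda-\al_{ik}^\lambda x_jx_\lambda)$, and the other is $\phi\partial_P(e_{ij}\wedge e_{ik})=\phi(r(e_{ij}\wedge e_{ik}))=\phi(-x_ke_{ij}+x_je_{ik})=-x_k\sum_\lambda\al_{ij}^\lambda x_\lambda+x_j\sum_\lambda\al_{ik}^\lambda x_\lambda$; these agree (up to the sign in the bracket), which is exactly the computation already carried out in Lemma~\ref{lemma:relsat}. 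On the Koszul-type generators $e_{ij}\curlywedge e_{k\ell}\in P^{-2}_\old$ the identity holds because both $\partial_P$ and $\phi$ are derivations compatible with the products defining $P_\old$, so closedness propagates from the degree $-1$ generators by the graded Leibniz rule — this is the routine but slightly fiddly verification.

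The genuine obstacle is the free variables in cohomological degrees $<-2$: a priori $\partial_P\phi-(-1)^{1\cdot 1}\phi\partial_P$ restricted to $P^{-2}$ need not be zero as a map into $P^{-1}$, and even if it is, one must choose $\phi$ on the degree $<-2$ free variables so that $[\partial_P,\phi]$ continues to vanish on $P^{-3}$, $P^{-4}$, etc. The clean way around this is the quasi-isomorphism argument: by \eqref{eqn:qiso} (equivalently Remark~\ref{rmk.doublegrading}) the cohomology of $\Der^{\bullet,\bullet}_\KK(P,P)$ agrees with that of $\Der^\bullet_\KK(P,S/I)$, and since $\overline\phi$ is closed in the latter complex (its class being the first-order deformation $\sum\al_{ij}^k\theta^{ij}_k\in T^1_{S/I}$), the partial lift of $\overline\phi$ we have built on $P^0,P^{-1},P^{-2}$ extends to a genuine closed derivation on all of $P$ — and by internal-degree considerations the extension to the higher free variables does not affect the formulas \eqref{phi:1} and \eqref{phi:2} on the low-degree generators, since perturbing $\phi$ on a free variable of cohomological degree $\le -3$ only changes its values in cohomological degrees $\le -2$. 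So I would phrase the proof as: define $\phi$ by \eqref{phi:1}, \eqref{phi:2}, extend arbitrarily (respecting internal grading) to the remaining free variables so that $[\partial_P,\phi]=0$, which is possible precisely because the obstruction lives in $H^2(\Der^{\bullet,\bullet}_\KK(P,P))\cong H^2(\Der^\bullet_\KK(P,S/I))$ and the relevant class vanishes — the vanishing on $P^{-2}$ being the explicit check above, and no further obstruction appearing because $\overline\phi$ was closed to begin with.
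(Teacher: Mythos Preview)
Your explicit checks on $P^{-1}$ and $P^{-2}$ are correct and match the paper's proof exactly. The gap is in the extension to free variables of cohomological degree $\leq -3$.

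Your argument there runs: extend $\phi$ arbitrarily, then $\eta := [\partial_P,\phi]$ defines a class in $H^2(\Der^\bullet_\KK(P,P))$, and this class vanishes because its image in $H^2(\Der^\bullet_\KK(P,S/I))$ is zero (as $\eta$ vanishes on $P^{-2}_\new$). That much is true, and it yields $\eta=[\partial_P,\chi]$ for some $\chi\in\Der^1_\KK(P,P)$, so $\phi-\chi$ is closed. But nothing in this argument controls $\chi$ on the free variables of degree $-1$ and $-2$: you may well have $\chi(e_{\ell m})\neq 0$ or $\chi(e_{ij}\wedge e_{ik})\neq 0$, and then $\phi-\chi$ no longer satisfies \eqref{phi:1} or \eqref{phi:2}. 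Your appeal to ``internal-degree considerations'' and to ``perturbing $\phi$ on a free variable of cohomological degree $\leq -3$'' does not close this gap: exactness of $\eta$ in $\Der^\bullet_\KK(P,P)$, or even in the acyclic kernel of the quasi-isomorphism \eqref{eqn:qiso}, does not hand you a $\chi$ supported only on degree $\leq -3$ free variables. (Being in that kernel only says $\chi(e_{\ell m})\in I$, not that it is zero.) The remark that ``no further obstruction appears because $\overline\phi$ was closed'' is also circular: closedness of $\overline\phi$ in $\Der^1_\KK(P,S/I)$ is exactly the content of the check you already performed on $P^{-2}_\new$.

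The paper's argument is different and avoids this issue entirely. It extends $\phi$ inductively, one cohomological degree at a time, never revisiting the already-defined values. For a free variable $y\in P^{k-1}_\new$ with $k\leq -2$, one notes that $\phi(\partial_P(y))\in P^{k+1}$ is closed (since $[\partial_P,\phi]$ vanishes on $P^{\geq k}$ by the inductive hypothesis and $\partial_P^2=0$), hence exact because $H^{k+1}(P)=0$; one then sets $\phi(y)$ to be any preimage. The relevant vanishing is therefore the acyclicity of the resolution $P$ in negative degrees, not $H^2$ of the derivation DGLA.
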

\begin{proof}
We may construct $\phi$ by specifying what $\phi$ does to the free variables  of each $P_i$; remaining values of $\phi$ are determined by the graded Leibniz rule. Moreover, any $\phi$ satisfying \eqref{phi:1} and \eqref{phi:2} is a lift of $\overline \phi$, since the image of $\phi$ in $\Der^1(P,S/I)$ is determined by 
\[\phi_{|P^{-1}}:P^{-1}\to P^0=S\to S/I.\]

The condition that $\phi$ be  closed is $[\partial_P,\phi]=0$. Suppose that $[\partial_P,\phi]$ vanishes for all elements of $P$ of cohomological degree at least $k$. Then this commutator also vanishes for products of such elements by the graded Leibniz rule. Assume now $k\leq -2$. Then for any free variable $y\in P^{k-1}$, $\partial_P(y)$ is closed, and since $[\partial_P,\phi](\partial_P(y))=0$ by assumption, $\phi(\partial_P(y))$ is exact and there exists $z\in P^k$ such that setting $\phi(y)=z$ forces $[\partial_P,\phi](y)=0$.

To prove the statement of the lemma, by the previous paragraph it thus remains to show that for $\phi$ satisfying \eqref{phi:1} and \eqref{phi:2}, 
\[[\partial_P,\phi](y)=0\]
for $y=e_{k\ell}$ or $y=e_{ij}\wedge e_{ik}$ $(j\neq k)$. The claim for $y=e_{k\ell}$ is automatic (since $P^{1}=0$). For $y=e_{ij}\wedge e_{ik}$ we have
\begin{align*}
	\partial_P(\phi(e_{ij}\wedge e_{ik}))&=
	\partial_P\left( \sum\limits_{\lambda}\left(\al_{ij}^\lambda e_{k\lambda} - \al_{ik}^\lambda e_{j\lambda}\right)\right)\\
	&= \sum\limits_{\lambda}x_\lambda\left(\al_{ij}^\lambda x_{k} - \al_{ik}^\lambda x_{j}\right)\\
	&=-\phi(-x_k e_{ij}+x_j e_{ik})\\
	&=-\phi(\partial_P(e_{ij}\wedge e_{ik})).
\end{align*}
The claim follows.
\end{proof}

\subsection{The cup product}
The Lie bracket on $\Der^\bullet_\KK(P,P)$ descents to a cup product map on cohomology \[H^1(\Der^\bullet_\KK(P,P))\to H^2(\Der^\bullet_\KK(P,P))\cong H^2(\Der^\bullet_\KK(P,S/I)).\] We are interested in computing the cup product of the class of $\phi$ with itself.

The image of $\frac{1}{2}[\phi,\phi]=\phi\circ \phi$ in $\Der^\bullet_\KK(P,S/I)\otimes \KK\pow{\al_{ij}^k}$ is determined by its action on $P^{-2}$ (all other graded pieces of $P$ map to $0$ since $S/I$ is concentrated in cohomological degree $0$). Moreover, by \eqref{phi:1} and the graded Leibniz rule, $\phi\circ \phi$ maps elements of $P^{-2}_\old=\bigwedge^2 S^p$ to elements of $I\otimes \KK\pow{\al_{ij}^k}$, hence their image in $S/I$ is zero. 
It remains to consider images of $e_{ij}\wedge e_{ik}$ for $j\neq k$.

\begin{lemma}\label{lemma:phi}
	For $i\neq j$,	the image of $\frac{1}{2}[\phi,\phi](e_{ij}\wedge e_{ik})$ in $S/I\pow{\al_{ij}^k}$ is
\[
	\sum_\ell \gamma_{ijk}^{\ell}x_\ell.
\]
\end{lemma}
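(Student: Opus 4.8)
The plan is to compute $\phi \circ \phi$ directly on the free variable $e_{ij} \wedge e_{ik}$ of $P^{-2}$ using the explicit formulas \eqref{phi:1} and \eqref{phi:2} for $\phi$, together with the graded Leibniz rule. Since $\phi$ has cohomological degree $1$, the composite $\phi \circ \phi$ lands in cohomological degree $2$, so it can only be nonzero on $P^{-2}$; and as observed just before the lemma, the image in $S/I$ of the contribution from $P^{-2}_{\old} = \bigwedge^2 S^p$ vanishes, so only the free variables $e_{ij}\wedge e_{ik}$ matter. Thus everything reduces to evaluating $\phi(\phi(e_{ij}\wedge e_{ik}))$.

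First I would substitute \eqref{phi:2}: $\phi(e_{ij}\wedge e_{ik}) = \sum_\lambda(\al_{ij}^\lambda e_{k\lambda} - \al_{ik}^\lambda e_{j\lambda})$. Applying $\phi$ again and using \eqref{phi:1}, which says $\phi(e_{k\lambda}) = \sum_\mu \al_{k\lambda}^\mu x_\mu$, I get
\[
\phi(\phi(e_{ij}\wedge e_{ik})) = \sum_{\lambda,\mu}\left(\al_{ij}^\lambda \al_{k\lambda}^\mu - \al_{ik}^\lambda \al_{j\lambda}^\mu\right)x_\mu.
\]
Relabeling $\mu \to \ell$ and $\lambda$ as the summation index, the coefficient of $x_\ell$ is exactly $\sum_\lambda(\al_{ij}^\lambda \al_{k\lambda}^\ell - \al_{ik}^\lambda \al_{j\lambda}^\ell) = \gamma_{ijk}^\ell$ by the definition \eqref{eqn:gamma}. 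This gives the claimed formula $\sum_\ell \gamma_{ijk}^\ell x_\ell$, and since $\frac12[\phi,\phi] = \phi\circ\phi$ as stated, we are done.

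The one point requiring a little care — and the most likely place for a sign or bookkeeping slip — is the role of the convention $t_{ii}^i = 0$ and the hypothesis $i \neq j$: because $\phi$ is only guaranteed to satisfy \eqref{phi:1} and the formulas involve the symmetrized variables $\al_{\ell m}^\lambda$, one should check that no spurious diagonal terms appear when $\lambda$ coincides with $i$, $j$, or $k$, and that the $e_{k\lambda}$ appearing in \eqref{phi:2} are interpreted via $e_{k\lambda} = e_{\lambda k}$ consistently with the basis indexing of $S^p$. Since the final expression $\sum_\ell \gamma_{ijk}^\ell x_\ell$ is being considered in $S/I$, any degree-$\geq 2$ contributions (which cannot arise here anyway, as $\phi$ is internally of degree $-1$ so $\phi\circ\phi$ has internal degree $-2$ and maps $e_{ij}\wedge e_{ik}$, of internal degree $3$, into internal degree $1$, i.e. into the span of the $x_\ell$) are automatically irrelevant. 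So the proof is essentially the two-line computation above, with the hypothesis $i\neq j$ ensuring we are in the regime where \eqref{phi:2} was established.
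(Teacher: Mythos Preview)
Your proof is correct and follows exactly the same two-line computation as the paper: apply \eqref{phi:2}, then \eqref{phi:1}, and recognize the coefficient of $x_\ell$ as $\gamma_{ijk}^\ell$. The additional commentary on degrees and diagonal terms is sound but not needed for the argument.
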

\begin{proof}
	Using \eqref{phi:1} and \eqref{phi:2}
	we compute 
\begin{align*}
	\phi(\phi(e_{ij}\wedge e_{ik}))=\sum_\lambda\left(\al_{ij}^\lambda \phi(e_{k\lambda}) - \al_{ik}^\lambda \phi(e_{j\lambda})\right) 
	&=\sum_{\lambda,\ell} \left(\al_{ij}^\lambda \al_{k\lambda}^{\ell}x_{\ell} - \al_{ik}^\lambda \al_{j\lambda}^{\ell}x_{\ell}\right)\\
	&=\sum_\ell \gamma_{ijk}^{\ell}x_\ell.
\end{align*}
\end{proof}

\begin{lemma}\label{lemma:phi0}
The image of the class of $\phi$ under the natural map 
\[
	\begin{tikzcd}
		H^1(\Der^\bullet_\KK(P,P))\otimes \KK\pow{\al_{ij}^k}/\mfm^3\arrow[r]&  H^2(\Der^\bullet_\KK(P,S/I))\otimes \KK\pow{\al_{ij}^k}/\mfm^3
	\end{tikzcd}
\]
is zero if and only if we impose the additional conditions on $\al_{ij}^k$ given by setting the elements of $\mfJ$ equal to $0$.

\end{lemma}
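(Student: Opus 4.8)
The statement is the DGLA counterpart of Lemma~\ref{lemma:order2}, and the natural strategy is to mirror that proof while exploiting what we already know. First I would recall that $\frac12[\phi,\phi]=\phi\circ\phi$ represents the cup product $\phi\cup\phi$ in $H^2(\Der^\bullet_\KK(P,S/I))$, and that by the discussion preceding Lemma~\ref{lemma:phi} this class is detected entirely by the images of $\phi\circ\phi(e_{ij}\wedge e_{ik})$ for $j\neq k$, since the old part $\bigwedge^2 S^p$ maps into $I$. By Lemma~\ref{lemma:phi}, modulo $I$ this image equals $\sum_\ell \gamma_{ijk}^\ell x_\ell$, which we should record is computed in $\KK\pow{\al_{ij}^k}/\mfm^3$, i.e.\ up to the quadratic part of the $\gamma$'s. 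So the class of $\phi\cup\phi$ is represented by the cocycle $c\in\Der^2_\KK(P,S/I)\otimes\KK\pow{\al_{ij}^k}/\mfm^3$ with $c(e_{ij}\wedge e_{ik})=\sum_\ell\gamma_{ijk}^\ell x_\ell$ and $c$ vanishing on the old part of $P^{-2}$.

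The crux is then to identify exactly when this cocycle is a coboundary, i.e.\ of the form $\overline\partial\psi=\psi\circ\partial_P$ for some $\psi\in\Der^1_\KK(P,S/I)\otimes\KK\pow{\al_{ij}^k}/\mfm^3$. Such a $\psi$ is, as in \S\ref{dgla:t1}, the same data as a map $\psi\colon S^p\to S/I$, which we write $\psi(e_{\ell m})=c_{\ell m}+I$ with $c_{\ell m}=c_{m\ell}$ quadratic in the $\al_{ij}^k$ (the internal grading forces $\psi$ to land in the constants times nothing of positive degree — more precisely $c_{\ell m}\in\KK[\al_{ij}^k]$ of degree $2$, so that $\psi$ has internal degree $-2$ and $\overline\partial\psi$ has the right internal degree $-3$). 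Since $\partial_P(e_{ij}\wedge e_{ik})=-x_k e_{ij}+x_j e_{ik}$, the coboundary condition $\overline\partial\psi=c$ reads
\[
	-x_k c_{ij}+x_j c_{ik}=\sum_\ell\gamma_{ijk}^\ell x_\ell\qquad\text{in }S/I\pow{\al_{ij}^k}/\mfm^3
\]
for all $i,j,k$ with $j\neq k$ (the old part imposes nothing since $c$ vanishes there and $\partial_P$ maps $\bigwedge^2 S^p$ into $I$). This is literally the same equation that appeared in the proof of Lemma~\ref{lemma:order2}. I would now invoke that lemma verbatim: comparing coefficients of $x_\ell$ shows this system is solvable precisely when $\gamma_{ijk}^\ell=0$ for $j,k,\ell$ distinct and $\gamma_{ijk}^k-\gamma_{ij\ell}^\ell=0$ for $j$ distinct from $k,\ell$ — i.e.\ exactly when the generators of $\mfJ$ vanish modulo $\mfm^3$, using Lemma~\ref{lemma:gens} to match the second family of generators — and in that case $c_{\ell m}$ is forced to be $\gamma_{\ell m\lambda}^\lambda$ for any $\lambda\neq m$. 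Hence the class of $\phi\cup\phi$ vanishes in $H^2$ modulo $\mfm^3$ if and only if the elements of $\mfJ$ are imposed to be zero.

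The main obstacle — really the only subtlety — is being careful about what $H^2(\Der^\bullet_\KK(P,S/I))$ is: I must justify that a cocycle $c$ supported on the new part $S^q$ of $P^{-2}$ is a coboundary \emph{if and only if} the displayed linear system over $S/I$ is solvable, with no extra room coming from $\psi$ being allowed to act nontrivially on $P^{-3}$ or on old generators. This follows because $\overline\partial\psi=\psi\circ\partial_P$ and $\partial_P(P^{-2})\subseteq S^p=P^{-1}$, so the value of $\overline\partial\psi$ on $P^{-2}$ depends only on $\psi|_{P^{-1}}$; and any choice of $\psi(e_{\ell m})=c_{\ell m}+I$ extends to an honest derivation $\psi\in\Der^1_\KK(P,S/I)$ by the graded Leibniz rule on the remaining free variables. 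I would also remark in passing — as the paper itself does implicitly — that matching the DGLA computation with the classical one is precisely the content of Table~\ref{table:one}: the $\psi(e_{\ell m})$ here are the $c_{\ell m}$ of Lemma~\ref{lemma:order2}. With that bookkeeping in place the proof is a short translation, so I would keep it to a few lines rather than re-deriving the coefficient comparison.
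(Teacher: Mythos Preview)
Your proposal is correct and follows essentially the same approach as the paper: represent the class by $\phi\circ\phi$ via Lemma~\ref{lemma:phi}, reduce the coboundary condition to the existence of $\psi\in\Hom_S(S^p,S/I)\otimes\KK\pow{\al_{ij}^k}/\mfm^3$ satisfying $\sum_\ell\gamma_{ijk}^\ell x_\ell=-x_k\psi(e_{ij})+x_j\psi(e_{ik})$, and then compare coefficients exactly as in the proof of Lemma~\ref{lemma:order2}. Your added justifications (why only the new part $S^q$ matters, why $\psi$ is determined by its values on $S^p$, and the internal grading forcing $c_{\ell m}$ to be quadratic) are implicit in the paper's terse version, so there is no substantive difference.
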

\begin{proof}
	The image of the class of $\phi$ in \[\Der^2_\KK(P,S/I)\otimes \KK\pow{\al_{ij}^k}/\mfm^3\cong \Hom(S^q,S/I)\otimes \KK\pow{\al_{ij}^k}/\mfm^3\] is described by Lemma \ref{lemma:phi}.
We wish to understand when it vanishes in cohomology; this is equivalent to the existence of a map
\[\psi\in \Der_\KK^1(P,(S/I)\otimes \KK\pow{\al_{ij}^k}/\mfm^3\cong \Hom_S(S^p,S/I)\otimes \KK\pow{\al_{ij}^k}/\mfm^3\]
such that for all $1\leq i,j,k\leq n$, $j\neq k$, 
\[
	\sum_\ell \gamma_{ijk}^{\ell}x_\ell=-x_k\psi(e_{ij})+x_j\phi(e_{ik}).
\]
Using the linear independence of the $x_1,\ldots,x_n$ and comparing coefficients as in the proof of Lemma \ref{lemma:order2}, the claim of the lemma follows.
\end{proof}

\subsection{The formal Kuranishi map}
For the reader's convenience, we recall here some details on the formal Kuranishi map, see e.g.~\cite[Section~13.2]{manetti}. We shall use the notation introduced in Remark~\ref{rmk.doublegrading}.
Let us fix direct sum decompositions 
\begin{equation}\label{eqn:decomp}
	\Der_\KK^{i,\bullet}(P,P)=B^i\oplus C^i\oplus \cH^i\qquad i=1,2
\end{equation}
with $B^i=\partial \left(\Der_\KK^{i-1,\bullet}(P,P)\right)$ and $B^i\oplus \cH^i=\ker\left( \partial\colon \Der_\KK^{i,\bullet}(P,P)\to \Der_\KK^{i+1,\bullet}(P,P)\right)$.
Let $\pi$ denote projection to $\cH^i$, and let $\sigma:B^2\to C^1$ denote the natural isomorphism induced by the above decompositions.
Note that we have isomorphisms of (internally) graded $\KK$-vector spaces
\[
{H}^i(\Der_\KK^{\bullet,\bullet}(P,P))\cong \cH^i,\qquad i=1,2.
\]

Fixing the above decompositions determines the \emph{formal Kuranishi map}
\[
	K:\widehat{\cH}^1\to \widehat{\cH}^2
\]
	where $\widehat{\cH^i}$ denotes the formal completion with respect to the origin. This map is most readily described in terms of $A$-valued points, where $A$ is any local Artinian $\KK$-algebra with maximal ideal $\mfm_A$ and residue field $\KK$. Indeed, the map is given by
	\[\begin{array}{r c l}
			\Hom(\Spec A,\widehat{\cH}^1)\cong \cH^1\otimes \mfm_A&\to& \cH^2\otimes \mfm_A\cong \Hom(\Spec A,\widehat{\cH}^2)\\
	y\ \ \ &\mapsto& \frac{1}{2}\pi([y,y]-[y,\sigma([y,y])]+\ldots)
\end{array}
\]
where the remaining terms of $\ldots$ are expressions involving $\sigma$ and at least three iterated brackets $[\,,]$.

Despite the fact that the formal Kuranishi map $K$ is not canonically defined and depends on the choice of the splitting~\eqref{eqn:decomp}, it is possible to prove \cite[Section~13.2]{manetti} that the zero fiber of $K$ in $\widehat{\cH}^1$ is independent of the choice of splitting, up to natural isomorphism.
For us, this is relevant because the zero fiber of $K$ is a miniversal base space for $\Spec S/I$, see~\cite{Fuk03}, \cite[Proposition~4.10]{BMM22}, and~\cite[Section~13.2]{manetti}.

\subsection{The miniversal base space for $\Spec S/I$}
We now show that the miniversal base space for $\Spec S/I$ is exactly as described in Theorem \ref{thm:main}.
We can and will assume that we have chosen the  decompositions \eqref{eqn:decomp} to be compatible with the internal $\ZZ$-grading of $\Der_\KK^{\bullet,\bullet}(P,P)$.
In that case, $\sigma$ and $\pi$ are homogeneous of internal degree $0$.
By Remark \ref{rem:obstruction}, $T^2_{S/I}$ has internal degrees at least $-2$; moreover, by \cite[Corollary~12.7]{MaMe19} we have $T^2_{S/I}\cong H^2(\Der_\KK^\bullet(P,P))\cong \cH^2$.
On the other hand $T^1_{S/I}\cong  H^1(\Der_\KK^\bullet(P,P))\cong \cH^1$ is concentrated in internal degree $-1$. 
Since the bracket $[\,,]$ is compatible with the internal $\ZZ$-grading, the only part of $K$ that survives is its quadratic term
\[
	\frac{1}{2}\pi([y,y]).
\]

By \S\ref{dgla:t1}, we may treat the $\al_{ij}^k$ as dual coordinates on $H^1(\Der_\KK^\bullet(P,P))$, and the vanishing locus of the formal Kuranishi map is cut out by the conditions on $\al_{ij}^k$ guaranteeing that the image of $\frac{1}{2}[\phi,\phi]$ in $H^2(\Der_\KK^\bullet(P,P))\cong H^2(\Der_\KK^\bullet(P,S/I))$ vanishes. The claim now follows from Lemma \ref{lemma:phi0}.

 \begin{remark}
	 If instead of the miniversal deformation of $\Spec S/I$ we wish to describe the local Hilbert functor, we would need to replace $\Der^\bullet_\KK(P,P)$ by the DGLA $\Der^\bullet_S (P, P )$, defined analogously to $\Der^\bullet_\KK(P,P)$ but with $S$-linear (as opposed to $\KK$-linear) derivations. The computations we carry out in this section translate verbatim to this setting (where we no longer set $\al_{ij}^k=0$ for $i=j=k$) and one recovers the statement of Theorem \ref{thm:main} for $\Hilb_{n+1}^n$. We leave the details to the reader.
 \end{remark}

 \section{Deformations of based algebras}\label{sec:algebra}
 \subsection{Based algebras}\label{sec:based}
 We now take a third perspective on the local structure of $\Hilb_{n+1}^n$ at $[I]$. Following \cite{poonen}, let $\mfB_{n+1}^1$ be the moduli space of commutative, associative, unital $\KK$-algebras of dimension $n+1$ with fixed basis $v_0,\ldots,v_n$ such that $v_0$ is the multiplicative identity.
 This is an affine scheme (cf.~\cite[Proposition 1.1 and Definition 1.4]{poonen}.
 The ring $S/I$ is a commutative, associative $\KK$-algebra with $1$; it also comes with a distinguished $\KK$-basis given by the residue classes of $1,x_1,\ldots,x_n$. Nakayama's lemma guarantees that there is an open neighborhood $U\subseteq \Hilb_{n+1}^n$ of $[I]$ such that for any point $[J]\in U$, the residue classes of $1,x_1,\ldots,x_n$ remain a basis of $S/J$. Hence, we may identify $U$ with $\mfB_{n+1}^1$.

 \begin{remark}
	 It follows from Lemma \ref{lemma:initideal} that $U$ is exactly the big Bialynicki-Birula cell of $\Hilb_{n+1}^n$ with respect to the $\mathbb{G}_m$-action induced by the diagonal action on $\AA^n$, see \cite{joachim} for a discussion of the Bialynicki-Birula decomposition in this setting.
 \end{remark}

 We may describe a $\KK$-point of $\mfB_{n+1}^1$ in terms of its multiplication constants $\mc_{ij}^k$ for $0\leq i,j,k\leq n$:
 \begin{align*}
	 v_i\cdot v_j=\sum_{k=0}^n \mc_{ij}^{k}v_k\qquad 0\leq i,j \leq n.
 \end{align*}
 Similar to \eqref{eqn:gamma}, we define
for $0\leq i,j,k,\ell \leq n$  
\begin{equation*}
	\widetilde{\gamma}_{ijk}^{\ell}=\sum_{\lambda=0}^n \mc_{ij}^\lambda \mc_{k\lambda}^{ \ell}-\mc_{ik}^\lambda \mc_{j\lambda}^{\ell}
\end{equation*}

\begin{proposition}[cf.~{\cite[Proof of Proposition 1.1]{poonen} }]\label{prop:Bn}
	The scheme $\mfB_{n+1}^1\subset \AA^{(n+1)^3}$ is cut out by the vanishing of the ideal $\widetilde{\mfJ}$ generated by
	 \begin{align}
		 & \mc_{ij}^k-\mc_{ji}^k\qquad 0\leq i,j,k\leq n\label{B1}\\
		 & \mc_{0i}^i-1\qquad0\leq i \leq n\label{B2}\\
		 & \mc_{0i}^j\qquad 0\leq i\neq j\leq n\label{B3}\\
		 & \widetilde{\gamma}_{ijk}^\ell\qquad 0\leq i,j,k,\ell \leq n,\ j\neq k.\label{B4}
	 \end{align}
 \end{proposition}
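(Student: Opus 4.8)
The plan is to prove Proposition~\ref{prop:Bn} by directly unwinding the definition of $\mfB_{n+1}^1$ as the moduli scheme of commutative, associative, unital algebras of dimension $n+1$ with a fixed ordered basis $v_0,\ldots,v_n$ and with $v_0$ the multiplicative identity. A $\KK$-point of $\AA^{(n+1)^3}$ with coordinates $\mc_{ij}^k$ defines such an algebra structure on $\bigoplus_{i=0}^n \KK v_i$ via $v_i\cdot v_j=\sum_k \mc_{ij}^k v_k$; the scheme $\mfB_{n+1}^1$ is by definition (cf.~\cite[Proposition~1.1, Definition~1.4]{poonen}) the closed subscheme of $\AA^{(n+1)^3}$ on which this data satisfies commutativity, associativity, and the requirement that $v_0$ acts as the identity. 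So the content of the proposition is simply that these three conditions are cut out \emph{scheme-theoretically} by the explicit equations \eqref{B1}--\eqref{B4}, and the proof is a bookkeeping exercise identifying each axiom with its equations.

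First I would handle commutativity and the unit axioms, which are immediate: the bilinear map is commutative precisely when $\mc_{ij}^k=\mc_{ji}^k$ for all $i,j,k$, giving \eqref{B1}; and $v_0$ is a (two-sided, using \eqref{B1}) multiplicative identity precisely when $v_0\cdot v_i=v_i$ for all $i$, i.e.\ $\sum_k \mc_{0i}^k v_k=v_i$, which by comparing basis coefficients is exactly $\mc_{0i}^i=1$ and $\mc_{0i}^j=0$ for $j\neq i$, giving \eqref{B2} and \eqref{B3}. Next comes associativity. The product is associative iff $(v_i\cdot v_j)\cdot v_k=v_i\cdot(v_j\cdot v_k)$ for all $0\leq i,j,k\leq n$; expanding both sides in the basis and comparing the coefficient of $v_\ell$ yields $\sum_\lambda \mc_{ij}^\lambda\mc_{\lambda k}^\ell=\sum_\lambda \mc_{jk}^\lambda\mc_{i\lambda}^\ell$ for all $i,j,k,\ell$. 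Using commutativity \eqref{B1} to rewrite $\mc_{\lambda k}^\ell=\mc_{k\lambda}^\ell$, this says exactly $\widetilde\gamma_{ijk}^\ell=0$ for all $0\leq i,j,k,\ell\leq n$ (not yet with the restriction $j\neq k$). So modulo \eqref{B1}, the ideal $\widetilde\mfJ'$ generated by \eqref{B1}, \eqref{B2}, \eqref{B3} together with \emph{all} the $\widetilde\gamma_{ijk}^\ell$ cuts out $\mfB_{n+1}^1$.

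The one genuine point requiring an argument — and the step I expect to be the main obstacle, though it is a mild one — is reducing the associativity equations from all $\widetilde\gamma_{ijk}^\ell$ to just those with $j\neq k$, i.e.\ showing that the $\widetilde\gamma_{iik}^\ell$ (equivalently $\widetilde\gamma_{ijj}^\ell$) lie in the ideal $\widetilde\mfJ$ generated by \eqref{B1}--\eqref{B4}. Here I would use the unit relations: because $\mc_{0i}^k$ is pinned down by \eqref{B2}--\eqref{B3}, any associativity equation in which one of the three ``input'' indices equals $0$ is automatically in $\widetilde\mfJ$. More usefully, one exploits that $v_0$ is invertible-by-fiat: concretely, $\widetilde\gamma$ satisfies formal identities analogous to \eqref{eqn:gamma1} and \eqref{eqn:gamma2}, namely $\widetilde\gamma_{ijk}^\ell=-\widetilde\gamma_{ikj}^\ell$ and the cyclic sum $\widetilde\gamma_{ijk}^\ell+\widetilde\gamma_{jki}^\ell+\widetilde\gamma_{kij}^\ell=0$ (the former from the definition after using \eqref{B1}, the latter a direct expansion). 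The antisymmetry $\widetilde\gamma_{ijk}^\ell=-\widetilde\gamma_{ikj}^\ell$ already shows $\widetilde\gamma_{ijj}^\ell=0$ identically in the polynomial ring — wait, this vanishes outright only after using \eqref{B1}; so in fact $\widetilde\gamma_{ijj}^\ell\in(\eqref{B1})\subseteq\widetilde\mfJ$, and likewise $\widetilde\gamma_{iik}^\ell\equiv -\widetilde\gamma_{iki}^\ell$ and $\widetilde\gamma_{iki}^\ell+\widetilde\gamma_{kii}^\ell+\widetilde\gamma_{iik}^\ell=0$, so $\widetilde\gamma_{iik}^\ell\in(\eqref{B1},\widetilde\gamma_{ijk}^\ell\text{ with }j\neq k)\subseteq\widetilde\mfJ$. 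This closes the gap: every $\widetilde\gamma_{ijk}^\ell$ with $j=k$ is already in $\widetilde\mfJ$, so $\widetilde\mfJ'=\widetilde\mfJ$, and therefore $V(\widetilde\mfJ)=\mfB_{n+1}^1$ as subschemes of $\AA^{(n+1)^3}$, completing the proof. I would also remark that this mirrors exactly the role of Lemma~\ref{lemma:gens} in the affine-Hilbert-scheme setting, which is the analogous reduction of generators of $\mfJ$.
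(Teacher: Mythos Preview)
Your proof is correct and follows essentially the same route as the paper's: translate commutativity, the unit axiom, and associativity into the equations \eqref{B1}--\eqref{B4} by expanding in the basis $v_0,\dots,v_n$. Your last paragraph overcomplicates the one remaining point, though: the generators excluded by the restriction $j\neq k$ are the $\widetilde\gamma_{ijj}^\ell$, and these vanish \emph{identically} in the polynomial ring straight from the definition $\widetilde\gamma_{ijj}^\ell=\sum_\lambda \mc_{ij}^\lambda \mc_{j\lambda}^\ell-\mc_{ij}^\lambda \mc_{j\lambda}^\ell=0$, so no appeal to \eqref{B1}, cyclic identities, or the separate case $\widetilde\gamma_{iik}^\ell$ (which is already in \eqref{B4} when $i\neq k$) is needed.
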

 \begin{proof}The first equation listed corresponds to commutativity of multiplication, and the second and third to 
$v_0$ acting as the multiplicative identity. Assuming the equations for commutativity, for associativity we consider
\begin{align*}
(v_j\cdot v_i) \cdot v_k-v_j\cdot (v_i\cdot v_k)
&=
\left(\sum_{\lambda=0}^n \mc_{ji}^{\lambda}v_\lambda\right)\cdot v_k
-v_j\cdot \sum_{\lambda=0}^n \mc_{ik}^{\lambda}v_\lambda\\
&=
\sum_{\lambda,\ell} 
\mc_{ji}^{\lambda}\mc_{\lambda k}^{\ell}v_\ell
-\mc_{ik}^{\lambda}\mc_{j\lambda}^{\ell}v_\ell\\
&=\sum_{\ell}\widetilde{\gamma}_{ijk}^{\ell}v_\ell.
\end{align*}
Since the $v_\ell$ are linearly independent, the condition for associativity (assuming commutativity) is thus $\widetilde{\gamma}_{ijk}^{\ell}=0$ for all $0\leq i,j,k,\ell\leq n$.

 \end{proof}

 \subsection{Connection to Theorem \ref{thm:main}} 
 We now relate the above to our equations for the Hilbert scheme from Theorem \ref{thm:main}:
 More specifically, Theorem \ref{thm:Bn} and its proof below give a third way of proving Theorem \ref{thm:main} rather directly by using the ideal from \ref{prop:Bn}. This third proof method is almost (but not quite) independent of the deformation-theoretic approaches considered earlier in the paper, see the discussion following the proof.

 \begin{theorem}\label{thm:Bn}
	 The projection map $\AA^{(n+1)^3}\to \AA^{n^2(n+1)/2}$ obtained by setting 
	 \[
		 \al_{ij}^k=\mc_{ij}^k\qquad 1\leq i,j,k\leq n
	 \]
	 induces an isomorphism between $\mfB_{n+1}^1$ and the subscheme of $\AA^{n^2(n+1)/2}$ cut out by
	 \[\mfJ\cap \KK[\al_{ij}^k\ |\ 1\leq i,j,k\leq n,\ i<j].\]
 \end{theorem}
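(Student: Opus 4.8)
The plan is to exhibit the isomorphism directly by eliminating the "extra" variables $\mc_{ij}^k$ (those with at least one index equal to $0$) using the equations \eqref{B1}--\eqref{B3} of $\widetilde{\mfJ}$, and then to check that under this elimination the remaining equations \eqref{B4} match up with the generators of $\mfJ$. Concretely, the coordinate ring of $\mfB_{n+1}^1$ is $\KK[\mc_{ij}^k \mid 0 \leq i,j,k \leq n]/\widetilde{\mfJ}$. The equations \eqref{B1} let us pass to the subring generated by $\mc_{ij}^k$ with $i \leq j$; the equations \eqref{B2} and \eqref{B3} say that the variables $\mc_{0i}^k$ are not free: $\mc_{0i}^i = 1$ and $\mc_{0i}^j = 0$ for $j \neq i$. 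So after eliminating all the $\mc_{0i}^k$, the coordinate ring of $\mfB_{n+1}^1$ is a quotient of $\KK[\mc_{ij}^k \mid 1 \leq i \leq j \leq n,\ 0 \leq k \leq n]$, that is, of a polynomial ring in the variables $\mc_{ij}^k$ with $1 \leq i \leq j \leq n$ (and $k$ ranging over $0,\dots,n$). The point of Theorem \ref{thm:main}'s universal family is that $\mc_{ij}^0$ should also be determined — it is the "constant term" $\gamma_{ijk}^k/(n-1)$ — so I would next show that \eqref{B4} forces $\mc_{ij}^0$ to be expressible in terms of the $\mc_{ij}^k$ with $k \geq 1$.

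First I would substitute the values $\mc_{0i}^i = 1$, $\mc_{0i}^j = 0$ ($j\neq i$) into $\widetilde{\gamma}_{ijk}^\ell$ and sort the resulting relations \eqref{B4} according to how many of the indices $i,j,k,\ell$ are zero. When $i,j,k,\ell$ are all nonzero, the sum $\sum_{\lambda=0}^n$ splits into the $\lambda = 0$ term plus $\sum_{\lambda=1}^n$; the $\lambda=0$ term contributes $\mc_{ij}^0 \mc_{k0}^\ell - \mc_{ik}^0\mc_{j0}^\ell$, which after the substitution equals $\mc_{ij}^0[\ell=k] - \mc_{ik}^0[\ell = j]$ (using $\mc_{k0}^\ell=\mc_{0k}^\ell$), while $\sum_{\lambda=1}^n$ is exactly $\gamma_{ijk}^\ell$ in the notation of \eqref{eqn:gamma} with $\al_{ij}^k := \mc_{ij}^k$. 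So \eqref{B4} with all indices nonzero reads
\[
	\gamma_{ijk}^\ell + \delta_{\ell k}\mc_{ij}^0 - \delta_{\ell j}\mc_{ik}^0 = 0.
\]
Comparing coefficients of $x_\ell$ exactly as in the proof of Lemma \ref{lemma:order2}: for $\ell \notin \{j,k\}$ this gives $\gamma_{ijk}^\ell = 0$ (for $j,k,\ell$ distinct, these are the first family of generators of $\mfJ$); for $\ell = k$ (with $k \neq j$) this gives $\mc_{ij}^0 = -\gamma_{ijk}^k$ — wait, sign check: the coefficient reads $\gamma_{ijk}^k + \mc_{ij}^0 = 0$, so $\mc_{ij}^0 = -\gamma_{ijk}^k$; and for $\ell = j$ (with $j \neq k$), $\gamma_{ijk}^j - \mc_{ik}^0 = 0$, i.e. $\mc_{ik}^0 = \gamma_{ijk}^j = -\gamma_{ikj}^j$ by \eqref{eqn:gamma1}, consistent with the first equation after relabeling. (I will need to be careful with signs and with the normalization convention $t_{ii}^i = \al_{ii}^i$ versus the $\mc_{ii}^i$; I suspect a clean bookkeeping choice makes $\mc_{ij}^0 = \gamma_{ij\lambda}^\lambda$ up to an overall sign, matching the $\gamma_{ijk}^k/(n-1)$ of Theorem \ref{thm:main} via Remark \ref{rem:cij}.) The upshot is: \eqref{B4} with all indices nonzero is equivalent to the vanishing of the first family of generators of $\mfJ$ together with the defining relation for $\mc_{ij}^0$; and the requirement that $\mc_{ij}^0$ be well-defined — i.e. that $-\gamma_{ijk}^k$ be independent of the chosen $k \neq j$, and symmetric in $i \leftrightarrow j$ in the appropriate sense — is precisely the second family of generators of $\mfJ$, via Lemma \ref{lemma:gens}.

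Next I would dispatch the cases of \eqref{B4} where some of $i,j,k,\ell$ are zero and show they are automatically satisfied (equivalently, lie in $\mfJ$ after substitution) or are already subsumed. When $j \neq k$, one of $j,k$ is nonzero; the degenerate cases ($i=0$, or $\ell=0$, or one of $j,k$ equal to $0$) should collapse to identities after using $\mc_{0a}^b = \delta_{ab}$ and the already-established relations — this is the kind of routine-but-fiddly verification I'd expect to be the only real bookkeeping burden. Having done this, I would assemble the isomorphism: the coordinate ring of $\mfB_{n+1}^1$ is $\KK[\mc_{ij}^k \mid 1\leq i \leq j\leq n,\ 1\leq k \leq n]$ modulo the ideal generated by the first family of $\mfJ$-generators and the "independence/symmetry" relations among the $\gamma_{ij\lambda}^\lambda$, with $\mc_{ij}^0$ eliminated as a polynomial in the rest; by Lemma \ref{lemma:gens} this is exactly $\KK[\al_{ij}^k \mid 1 \leq i\leq j\leq n,\ 1 \leq k\leq n]/(\mfJ \cap \KK[\al_{ij}^k])$, and the projection $\al_{ij}^k = \mc_{ij}^k$ realizes the claimed map. (One should note $\mfJ$ as defined lives in $\KK\pow{\al_{ij}^k}$ over all $k$ including $k = 0$... but actually the generators of $\mfJ$ only involve the $\gamma$'s, which by \eqref{eqn:gamma} only involve $\al_{ij}^k$ with... no, $\gamma$ involves all $\al$; I will need to recall that in the setup of \S\ref{sec:classical} the variables are $\al_{ij}^k$ with $1 \leq i,j,k \leq n$ — there is no index $0$ — so $\mfJ \subset \KK\pow{\al_{ij}^k}$ with $i,j,k \geq 1$, and the statement's $\mfJ \cap \KK[\al_{ij}^k \mid 1\leq i,j,k\leq n, i<j]$ simply records the elimination of the diagonal $\al_{ii}^i$; I would reconcile this indexing at the outset so the final identification is transparent.)

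The main obstacle I anticipate is not conceptual but organizational: correctly tracking the three slightly different index conventions in play — the $\mc_{ij}^k$ with $0 \leq i,j,k \leq n$ of \S\ref{sec:algebra}, the $\al_{ij}^k$ with $1\leq i,j,k\leq n$ of \S\ref{sec:classical}, and the reduced set $1 \leq i,j,k\leq n,\ i < j$ appearing in the theorem statement — together with the sign conventions in $r$, in $\theta$, and in the two families of $\gamma$'s, so that the substitution $\al_{ij}^k = \mc_{ij}^k$ genuinely carries one ideal onto the other and the eliminated variable $\mc_{ij}^0$ matches $\gamma_{ij\lambda}^\lambda$ (hence $\sum_k \gamma_{ijk}^k/(n-1)$ by Remark \ref{rem:cij}), recovering the universal family of Theorem \ref{thm:main}. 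Once the dictionary is fixed, the equivalence of the two ideals is a direct comparison of generators as carried out above, using Lemma \ref{lemma:gens} to pass between the two presentations of the second family.
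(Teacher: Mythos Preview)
Your overall strategy matches the paper's proof almost exactly: define a map $\pi$ from the big polynomial ring to $R=\KK[\al_{ij}^k]$ that eliminates the variables with a zero index (sending $\mc_{ij}^0$ to $-\gamma_{ij\lambda}^\lambda$, equivalently $-\frac{1}{n-1}\sum_\lambda\gamma_{ij\lambda}^\lambda$ by Remark~\ref{rem:cij}), note that the inclusion $\iota$ is a section, and check that $\pi(\widetilde\mfJ)\subseteq\mfJ\cap R$ and $\iota(\mfJ\cap R)\subseteq\widetilde\mfJ$. Your treatment of the relations $\widetilde\gamma_{ijk}^\ell$ with $i,j,k,\ell\geq 1$ is correct and recovers both families of generators of $\mfJ$ together with the elimination of $\mc_{ij}^0$, exactly as in the paper.

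The genuine gap is your dismissal of the ``degenerate cases'' as routine. The cases $i=0$, $j=0$, or $k=0$ are indeed trivial after substituting $\mc_{0a}^b=\delta_{ab}$. But the case $\ell=0$ with $i,j,k\geq 1$ is \emph{not}: after substituting the expression for $\mc_{ab}^0$ you derived, $\widetilde\gamma_{ijk}^0$ becomes (up to normalization)
\[
\sum_{\lambda,m}\bigl(\al_{ij}^\lambda\gamma_{k\lambda m}^m-\al_{ik}^\lambda\gamma_{j\lambda m}^m\bigr),
\]
a cubic expression in the $\al$'s that must lie in $\mfJ$. This is precisely Lemma~\ref{lemma:SYZ}, and the paper does not prove it elementarily; it deduces it from the existence of the versal family established in \S\ref{sec:classical} (the equation $\hat f\hat r\equiv 0\bmod\mfJ$ forces $f^{(2)}r^{(1)}(e_{ij}\wedge e_{ik})\in\mfJ$). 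The paper explicitly flags this as the one place where deformation-theoretic input is needed, and remarks that a direct elementary proof would be of interest. So your outline is correct but incomplete: you must either invoke Lemma~\ref{lemma:SYZ} or supply an independent argument for this cubic syzygy.
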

 \begin{proof}
	 Set 
	 \begin{align*}
		 \widetilde{R}:&=\KK[ \mc_{ij}^k\ |\ 0\leq i,j,k\leq n]\\
		 R:&=\KK[ \al_{ij}^k\ |\ 1\leq i,j,k\leq n,\ i<j]
	 \end{align*}
	 and let $\pi:\widetilde{R}\to R$ be the ring homomorphism defined by 
	 \[
		 \pi(\mc_{ij}^k)= \begin{cases}
			 0 & i=0,j\neq k\ \textrm{or}\ j=0,i\neq k\\
			 1 & i=0,j=k\ \textrm{or}\ j=0,i=k\\
			 -\frac{1}{n-1}\sum_{\lambda=1}^n \gamma_{ij\lambda}^\lambda & i,j\neq 0,k=0\\
			t_{ij}^k & \textrm{else}
		 \end{cases}
	 \]
	 where we maintain the convention that $\al_{ij}^k=\al_{ji}^k$.
We note that the inclusion $\iota:R\to \widetilde R$
sending $\al_{ij}^k$ to $\mc_{ij}^k$ is a section of this map.

We will show that $\pi(\widetilde\mfJ)\subseteq \mfJ\cap R$ and $\iota(\mfJ\cap R)\subseteq \widetilde \mfJ$.
The homomorphisms $\pi$ and $\iota$ then descend to give homomorphisms
\[
\begin{tikzcd}
	\widetilde R/\widetilde \mfJ \arrow[r, bend left,"\overline\pi"] & R/(R\cap \mfJ) \arrow[l,bend left, "\overline\iota"]
\end{tikzcd}
\]
with $\overline\iota$ a section of $\overline\pi$. We will then show that $\overline\iota$ is surjective, hence $\overline \iota$ is an isomorphism, and the claim of the theorem follows.

To that end, let $\widetilde{\mfJ}_0\subset \widetilde{\mfJ}$ be the ideal generated by the elements of \eqref{B1}, \eqref{B2}, \eqref{B3}, and 
$\widetilde\gamma_{ijk}^\ell$ for $0\leq i,j,k,\ell \leq n$ and either $i=0$, $j=0$, $k=0$.
It is immediate from the definition of $\pi$ that $\pi(\widetilde{\mfJ}_0)=0$.

	 We next consider the other generators of $\widetilde \mfJ$, that is, $\widetilde\gamma_{ijk}^\ell$ for $0\leq i,j,k,\ell \leq n$, $j\neq k$, and $i,j,k\geq 1$. 
 We first we suppose that $\ell\geq 1$. Then
 \begin{equation}\label{eqn:tildegamma}
\widetilde{\gamma}_{ijk}^{\ell}
=\sum_{\lambda=1}^n(\mc_{ij}^\lambda\mc_{k\lambda}^\ell-\mc_{ik}^\lambda\mc_{j\lambda}^\ell)  +(\mc_{ij}^{0}\mc_{k0}^{\ell}-\mc_{ik}^{0}\mc_{j0}^{\ell}).
\end{equation}
If $j,k,\ell$ are distinct, 
$(\mc_{ij}^{0}\mc_{k0}^{\ell}-\mc_{ik}^{0}\mc_{j0}^{\ell})\in \widetilde{\mfJ}_0$, and we thus obtain that 
$\pi(\widetilde{\gamma}_{ijk}^{\ell})=\gamma_{ijk}^\ell\in\mfJ\cap R$ and similarly $\iota(\gamma_{ijk}^\ell)\in \widetilde{\mfJ}$.

If $k=\ell$, 
\begin{equation}\label{eqn:sij}
	(\mc_{ij}^{0}\mc_{k0}^{\ell}-\mc_{ik}^{0}\mc_{j0}^{\ell})\equiv s_{ij}^0 \mod \widetilde{\mfJ}_0
\end{equation}
and we obtain
\[
	\pi(\widetilde{\gamma}_{ijk}^{k})=\gamma_{ijk}^{k}-\frac{1}{n-1}\sum_{\lambda=1}^n \gamma_{ij\lambda}^\lambda\in \mfJ\cap R.
\]
Furthermore, for $m\geq 1$, $m\neq j$, we have
\[
\widetilde{\gamma}_{ijk}^{k}-\widetilde{\gamma}_{ijm}^{m}\equiv \sum_{\lambda=1}^n(\mc_{ij}^\lambda\mc_{k\lambda}^\ell-\mc_{ik}^\lambda\mc_{j\lambda}^\ell)
-\sum_{\lambda=1}^n(\mc_{ij}^\lambda\mc_{k\lambda}^m-\mc_{ik}^\lambda\mc_{j\lambda}^m) \mod \widetilde{\mfJ}_0
\]
and we conclude $\iota(\gamma_{ijk}^k-\gamma_{ijk}^m)\in \widetilde\mfJ$. By the previous two paragraphs, we thus obtain that $\iota(\mfJ\cap R)\subseteq \widetilde\mfJ$.

For $j=\ell$ we obtain $\pi(\widetilde{\gamma}_{ijk}^{j})\in \mfJ\cap R$ similar to above.
We now instead take $\ell=0$, and obtain
 \begin{align*}
	 (n-1)\cdot \pi(\widetilde\gamma_{ijk}^0)&=(n-1)\pi(\mc_{ij}^0\mc_{k0}^0-\mc_{ik}^0\mc_{j0}^0)+(n-1)\sum_{\lambda=1}^n\left( \al_{ij}^\lambda \pi(\mc_{k\lambda}^0)-
\al_{ik}^\lambda \pi(\mc_{j\lambda}^0)\right)\\
&=\sum_{m,\lambda=1}^n\left( -\al_{ij}^\lambda \gamma_{k\lambda m}^m+
\al_{ik}^\lambda \gamma_{j\lambda m}^m\right).
 \end{align*}
  By Lemma \ref{lemma:SYZ} we conclude that $\pi(\widetilde{\gamma}_{ijk}^0)\in\mfJ\cap R$.
Thus, $\pi(\widetilde\mfJ)\subseteq \mfJ\cap R$.

Finally, we argue that $\overline\iota$ is surjective. To that end, we must show that modulo $\widetilde \mfJ$, any $\mc_{ij}^k$ is in the subring of $\widetilde R$ generated by those $\mc_{ij}^k$ with $1\leq i,j,k\leq n$ and $i<j$.
This is immediate for all $\mc_{ij}^k$ except when $k=0$. In that case, we may use
\eqref{eqn:tildegamma} with $k=\ell$ coupled with \eqref{eqn:sij}.
 \end{proof}

 By the discussion at the beginning of \S\ref{sec:based}, we conclude that an open affine neighborhood of $\Hilb_{n+1}^n$ is cut out by the ideal $\mfJ\cap \KK[\al_{ij}^k]$. We thus obtain not only a description of the completion of the local ring of $\Hilb_{n+1}^n$ at $[I]$, but in fact a description of the local ring. We note the only place deformation-theoretic methods entered into our proof of Theorem \ref{thm:Bn} was via Lemma \ref{lemma:SYZ}. It would be interesting to give a purely elementary proof of Lemma \ref{lemma:SYZ} not relying on the arguments of \S\ref{sec:classical}.

 \section{Linear subspaces of $\Hilb_{n+1}^n$}\label{sec:linear}

We denote by $U$ the open neighborhood  of $[I]$ in $\Hilb_{n+1}^n$ from \S\ref{sec:based}.
By Theorem \ref{thm:Bn}, we may realize $U$ as the affine subscheme of $\AA^{n^2(n+1)/2}$ cut out by the generators of $\mfJ$. Although we have explicit equations for $U$, they are complicated enough that it is difficult to say anything directly about the component structure of the Hilbert scheme at this point. Of course, one component passing through $[I]$ is the smoothing component, whose general point corresponds to $n+1$ distinct (smooth) points in $\AA^n$. As such, this component has dimension $(n+1)n=n^2+n$.

Somewhat surprisingly, 
there are \emph{linear} subspaces of $U$ whose dimension grows much more rapidly than that of the smoothing component.
Let $A,B$ be disjoint subsets of $\{1,\ldots,n\}$ of sizes $a,b$ and 
set 
\[
	L_{A,B}=\{x\in \AA^{n^2(n+1)/2}\ |\ x_{ij}^k=0\ \textrm{unless}\ i,j\in A,\ k\in B\}.
\]
\begin{proposition}\label{prop:linear}
	$L_{A,B}$ is a linear subspace of $\AA^{n^2(n+1)/2}$ of dimension $a(a-1)b/2$. Moreover, $L_{A,B}\subseteq U$.
\end{proposition}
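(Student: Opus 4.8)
The plan is to verify the two assertions separately, the linearity being essentially a matter of counting coordinates, and the inclusion $L_{A,B}\subseteq U$ reducing to checking that all generators of $\mfJ$ vanish identically on $L_{A,B}$.

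First I would observe that $L_{A,B}$ is by definition the coordinate subspace of $\AA^{n^2(n+1)/2}$ obtained by setting to zero all coordinates $\al_{ij}^k$ except those with $i,j\in A$ and $k\in B$; hence it is linear. To compute its dimension, I would count the surviving coordinates. Recall that we only use coordinates with $i\leq j$ and identify $\al_{ij}^k=\al_{ji}^k$. The surviving coordinates are thus indexed by an unordered pair $\{i,j\}\subseteq A$ with $i\neq j$ (note $i=j$ would force $k\in A\cap B=\emptyset$ to also lie in $B$, impossible, so in fact only $i<j$ contributes — wait, I should be careful: the constraint is $i,j\in A$ and $k\in B$, with $A\cap B=\emptyset$; if $i=j$ there is no obstruction from disjointness, so I must recheck whether $\al_{ii}^k$ with $i\in A,k\in B$ survives). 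In fact $\al_{ii}^k$ with $i\in A$, $k\in B$ does satisfy $i,j=i\in A$, $k\in B$, so it survives; this would give dimension $\binom{a}{2}b + ab = \frac{a(a+1)}{2}b$, not $\frac{a(a-1)}{2}b$. So the intended definition must implicitly exclude $i=j$, or one must use the relation $\al_{ii}^i$-type exclusions — I would reconcile this with the paper's conventions (the ambient space uses indices $i\le j$, and perhaps the relevant reading is that $L_{A,B}$ lives inside the already-truncated coordinate ring where, combined with some earlier normalization, $i=j$ terms are treated differently). Granting the stated dimension $a(a-1)b/2$, the count is: unordered pairs $\{i,j\}$ with $i<j$, $i,j\in A$ (there are $\binom a2=\frac{a(a-1)}{2}$ of them) times choices of $k\in B$ (there are $b$).

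For the inclusion, I would show that every generator of $\mfJ$ listed before Theorem~\ref{thm:main} — namely $\gamma_{ijk}^\ell$ with $j,k,\ell$ distinct, and $\gamma_{ijk}^k-\gamma_{ij\ell}^\ell$ — restricts to zero on $L_{A,B}$. The key structural fact is: on $L_{A,B}$, a product $\al_{ij}^\lambda\al_{k\lambda}^\ell$ can be nonzero only if $i,j\in A$ and $\lambda\in B$ (from the first factor) \emph{and} $k,\lambda\in A$ and $\ell\in B$ (from the second factor); in particular we would need $\lambda\in A\cap B=\emptyset$, a contradiction. Hence every monomial appearing in every $\gamma_{ijk}^\ell$ vanishes identically on $L_{A,B}$, so $\gamma_{ijk}^\ell|_{L_{A,B}}=0$ for all $i,j,k,\ell$, and a fortiori all generators of $\mfJ$ vanish. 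Therefore $L_{A,B}$ is contained in the vanishing locus of $\mfJ$, which by Theorem~\ref{thm:Bn} and the discussion in \S\ref{sec:based} is the open neighborhood $U\subseteq\Hilb_{n+1}^n$.

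The main obstacle I anticipate is \emph{not} the inclusion — that is a one-line observation once the disjointness of $A$ and $B$ is exploited — but rather pinning down the precise coordinate conventions so that the dimension count yields exactly $a(a-1)b/2$ rather than $a(a+1)b/2$; this requires care about whether diagonal indices $i=j$ are permitted in the defining conditions for $L_{A,B}$, and matching this against the paper's running convention that the $\al_{ij}^k$ are indexed by $i\le j$ with the further normalization (in the DGLA section) that $t_{ii}^i=0$. I would resolve this by reading the definition of $L_{A,B}$ as ranging over the coordinates actually present in the local ring, so that the diagonal-on-diagonal variables $\al_{ii}^i$ are already absent, and any remaining $\al_{ii}^k$ with $k\ne i$ are excluded by the requirement that the full index triple meet $A$ (for $i,j$) and $B$ (for $k$) with $A\cap B=\emptyset$ forcing, in the relevant reading, $i<j$.
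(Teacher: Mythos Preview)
Your argument for the inclusion $L_{A,B}\subseteq U$ is exactly the paper's: every monomial $\al_{ij}^\lambda\al_{k\lambda}^\ell$ in $\gamma_{ijk}^\ell$ forces $\lambda\in B$ (from the first factor) and $\lambda\in A$ (from the second), hence vanishes since $A\cap B=\emptyset$. The paper phrases this as ``the first or second variable must vanish depending on whether $\lambda\in A$ or $\lambda\in B$,'' which is the same observation.

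Your unease about the dimension count is justified, and the paper does not resolve it---it simply says ``the dimension count is straightforward.'' As $L_{A,B}$ is literally defined, the surviving coordinates are those $\al_{ij}^k$ with $i\le j$, $i,j\in A$, $k\in B$, and there are $\binom{a+1}{2}b=a(a+1)b/2$ of these, not $a(a-1)b/2$. Your attempts to reconcile via the convention $\al_{ii}^i=0$ do not help, since for $i\in A$ and $k\in B$ we have $k\neq i$, so $\al_{ii}^k$ is a genuine surviving coordinate. This appears to be a minor slip in the paper (either the formula should read $a(a+1)b/2$, or the definition of $L_{A,B}$ should require $i\neq j$); it does not affect the inclusion argument, and if anything only strengthens the subsequent lower bounds on $\dim\Hilb_{n+1}^n$. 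You should not spend further effort trying to make the count come out to $a(a-1)b/2$ from the definition as written.
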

\begin{proof}
	The dimension count is straightforward. The claim that $L_{A,B}\subseteq U$ follows from the fact that the restriction of any $\gamma_{ijk}^\ell$ to $L_{A,B}$ vanishes, since each monomial in $\gamma_{ijk}^\ell$ involves one term that must vanish. For example, for $\al_{ij}^\lambda \al_{k\lambda}^\ell$, the first or second variable must vanish depending on whether $\lambda \in A$ or $\lambda \in B$.
\end{proof}

\begin{example}
	For $n=16$, we may choose $a=11,b=5$ to obtain a linear subspace of dimension $275$. On the other hand, the smoothing component has dimension $272$, so we conclude that $\Hilb_{15}^{14}$ is not irreducible. Of course, we already knew this was true, see the discussion in \S\ref{sec:intro}.
\end{example}

Maximizing the dimension $a(a-1)b/2$ subject to the constraint $a+b=n$ means $a$ is the floor or ceiling of
\[
	a_{\max}=\frac{(n+1)+\sqrt{n^2-n+1}}{3}.
\]
	Notice that for every $n\geq1$ we have
	\[
	\frac{2n}{3} \leq a_{\max} \leq \frac{2n+1}{3} \; .
	\]

\begin{cor}\label{cor:linear}
The open subset $U$ of $\Hilb_{n+1}^n$ contains a linear subspace of dimension at least
\begin{align*}
\frac{2}{27}n^3-\frac{1}{9}n^2\qquad \textrm{if}\ n\equiv 0\ \mod 3\\
\frac{2}{27}n^3-\frac{1}{9}n^2+\frac{1}{27}\qquad \textrm{if}\ n\equiv 1\ \mod 3\\
\frac{2}{27}n^3-\frac{1}{9}n^2-\frac{1}{9}n+\frac{2}{27}\qquad \textrm{if}\ n\equiv 2\ \mod 3\\
\end{align*}
In particular, this gives a lower bound on the dimension of $\Hilb_{n+1}^n$.
The number of such linear subspaces is at least 
${n}\choose{m}$
where $m$ is respectively $2n/3$, $(2n+1)/3$, and $(2n+2)/3$ if $n\equiv 0,1,\ \textrm{or}\ 2\mod 3$.
\end{cor}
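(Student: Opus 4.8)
The plan is to reduce the statement to Proposition~\ref{prop:linear} together with an elementary one-variable optimization. Since the dimension $a(a-1)b/2$ of $L_{A,B}$ is increasing in $b$, there is no loss in taking $B=\{1,\dots,n\}\setminus A$, so that $b=n-a$ and the task becomes to maximize the integer-valued function $g(a):=\frac{1}{2}a(a-1)(n-a)$ over integers $a$ with $2\le a\le n-1$, and then to count the subsets $A$ of the optimal size.

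First I would find the real maximum of $g$ on $[0,n]$: from $g(a)=\frac{1}{2}\bigl(-a^3+(n+1)a^2-na\bigr)$ we get $g'(a)=\frac{1}{2}\bigl(-3a^2+2(n+1)a-n\bigr)$, whose larger root is $a_{\max}=\frac{1}{3}\bigl((n+1)+\sqrt{n^2-n+1}\bigr)$, and since $g$ is a cubic with negative leading coefficient this is where $g$ attains its maximum on $[0,n]$. As already recorded above, squaring the radical shows $\frac{2n}{3}\le a_{\max}\le\frac{2n+1}{3}$ for every $n\ge1$. I would then take $a$ to be a nearest integer to $a_{\max}$, namely $\frac{2n}{3}$, $\frac{2n+1}{3}$, or $\frac{2n+2}{3}$ according as $n\equiv0,1,2\bmod3$ (in the last case $a=\frac{2n-1}{3}$ works just as well), and substitute it into $g$. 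A short computation then yields exactly the three displayed bounds; for example when $n=3k$ one finds $g(2k)=k^2(2k-1)=\frac{n^2(2n-3)}{27}=\frac{2}{27}n^3-\frac{1}{9}n^2$.

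For the two remaining assertions: each $L_{A,B}$ is a linear subspace of $\AA^{n^2(n+1)/2}$, hence integral, and it is contained in $U\subseteq\Hilb_{n+1}^n$ by Proposition~\ref{prop:linear}; therefore $\dim\Hilb_{n+1}^n\ge\dim L_{A,B}$, which is the asserted lower bound on the dimension. For the count, once $a$ is fixed as above the set $B$ is forced to be the complement $\{1,\dots,n\}\setminus A$, so $L_{A,B}$ is determined by $A$ alone, and two distinct subsets $A$ of size $m$ give subspaces with distinct sets of non-vanishing coordinates $x_{ij}^k$ (here one uses $a\ge2$ and $b=n-a\ge1$, which hold since $n\ge3$), hence distinct linear subspaces. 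There are $\binom{n}{m}$ such $A$.

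I do not anticipate any real obstacle. The one place requiring care is the discrete optimization — checking that the integer chosen in each residue class is a maximizer of $g$ over $\{2,\dots,n-1\}$ (for a lower bound it in fact suffices that it produce the stated value), together with the elementary inequalities bounding $a_{\max}$ and the polynomial identities obtained after substituting each choice of $a$ into $g$. All of this is routine.
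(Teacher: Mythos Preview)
Your proposal is correct and follows the same approach as the paper: apply Proposition~\ref{prop:linear} with $B$ the complement of $A$ and $a=m$ as specified in the statement. The paper's own proof is a single sentence to this effect, so your write-up simply fills in the elementary optimization, the polynomial substitutions in each residue class, and the justification that distinct choices of $A$ yield distinct $L_{A,B}$, all of which the paper leaves to the reader.
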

\begin{proof}
We apply Proposition \ref{prop:linear} with $a$ equal to the value $m$ from the statement of the corollary.
\end{proof}

Our lower bound on the dimension of $\Hilb_{n+1}^n$ was already known due to work of Poonen, see \cite[Corollary 4.4, Theorem 9.2]{poonen}.
Rather remarkably, the dimension of a maximal-dimensional linear subspace of $U$ is very close to the dimension of $\Hilb_{n+1}^n$, see Corollary 12.1 of op.~cit.

\bibliographystyle{alpha}
\bibliography{Biblio_Hilbert}

\end{document}